\newcommand{\erase}[1]{}
\theoremstyle{plain}
\newtheorem{theorem}{Theorem}[section]
\newtheorem{lemma}[theorem]{Lemma}
\newtheorem{corollary}[theorem]{Corollary}
\theoremstyle{definition}
\newtheorem{definition}[theorem]{Definition}
\newtheorem{claim}[theorem]{Claim}
\newtheorem{conjecture}[theorem]{Conjecture}
\theoremstyle{remark}
\newtheorem{remark}[theorem]{Remark}
\numberwithin{equation}{section}
\numberwithin{table}{section}
\numberwithin{equation}{section}
\numberwithin{table}{section}
\numberwithin{figure}{section}
\renewcommand{\qed}{\hfill {$\Box$}}
\newcommand{\C}{\mathord{\mathbb C}}
\newcommand{\F}{\mathord{\mathbb F}}
\renewcommand{\P}{\mathord{\mathbb  P}}
\newcommand{\Q}{\mathord{\mathbb  Q}}
\newcommand{\R}{\mathord{\mathbb R}}
\newcommand{\Z}{\mathord{\mathbb Z}}
\newcommand{\BBB}{\mathord{\mathcal B}}
\newcommand{\JJJ}{\mathord{\mathcal J}}
\newcommand{\KKK}{\mathord{\mathcal K}}
\newcommand{\LLL}{\mathord{\mathcal L}}
\newcommand{\MMM}{\mathord{\mathcal M}}
\newcommand{\UUU}{\mathord{\mathcal U}}
\newcommand{\WWW}{\mathord{\mathcal W}}
\newcommand{\maprightsp}[1]{\overset{#1}{\longrightarrow} }
\newcommand{\inj}{\hookrightarrow}
\newcommand{\surj}{\mathbin{\to \hskip -7pt \to}}
\newcommand{\isom}{\overset{\sim}{\to}}
\newcommand{\set}[2]{\{\; {#1} \; \mid \; {#2} \;  \}}
\newcommand{\shortset}[2]{\{ {#1} \,|\, {#2}   \}}
\newcommand{\tensor}{\otimes}
\newcommand{\sprime}{\sp\prime}
\newcommand{\sptimes}{\sp{\times}}
\newcommand{\dual}{\sp{\vee}}
\newcommand{\inv}{\sp{-1}}
\newcommand{\bdr}{\partial}
\newcommand{\Hom}{\mathord{\mathrm {Hom}}}
\newcommand{\Ker}{\operatorname{\mathrm {Ker}}\nolimits}
\newcommand{\Coker}{\operatorname{\mathrm {Coker}}\nolimits}
\newcommand{\Image}{\operatorname{\mathrm {Im}}\nolimits}
\newcommand{\Spec}{\operatorname{\mathrm {Spec}}\nolimits}
\newcommand{\rank}{\operatorname{\mathrm {rank}}\nolimits}
\newcommand{\rmand}{\textrm{and}}
\newcommand{\quand}{\quad\rmand\quad}
\newcommand{\LLLKKK}{\LLL_{\KKK}}
\newcommand{\phim}{\phi}
\newcommand{\rhom}{\rho}
\newcommand{\rhomJ}{\rho_{J}}
\newcommand{\barR}{\widebar{R}}
\newcommand{\Pz}{\P}
\newcommand{\Pw}{\mathbf{P}}
\newcommand{\Cz}{\C}
\newcommand{\Cw}{\mathbf{C}}
\newcommand{\Ru}{\mathbf{R}}
\newcommand{\ii}{\sqrt{-1}}
\newcommand{\intp}[1]{\langle#1\rangle}
\newcommand{\sgn}{\mathord{\rm{sgn}}}
\newcommand{\affPi}{\Pi\sp{\circ}}
\newcommand{\ve}{\varepsilon}
\newcommand{\intpempty}{\intp{\phantom{\cdot},\phantom{\cdot}}}
\newcommand{\Tors}{\operatorname{Tors}}
\newcommand{\latin}[1]{\emph{#1}}
\newcommand{\ie}{\latin{i.e.}}
\newcommand{\widebar}[1]{\,\overline{\!#1}}
\renewcommand{\index}{\overline{n+1}}
\newcommand{\GLp}{\Lambda}
\def\ideal(#1|#2){(#1\,|\,#2)}
\newcommand{\module}{\mathcal{C}}
\newcommand{\bmodule}{\widebar{\module}}
\begin{document}

\title[Projective subspaces in
complex Fermat varieties]
{On the topology of
projective subspaces in  \\
complex Fermat varieties}

\author{Alex Degtyarev}

\address{%
Bilkent University\\
Department of Mathematics\\
06800 Ankara, TURKEY}
\email{
degt@fen.bilkent.edu.tr}

\author{Ichiro Shimada}

\address{
Department of Mathematics,
Graduate School of Science,
Hiroshima University,
1-3-1 Kagamiyama,
Higashi-Hiroshima,
739-8526 JAPAN
}
\email{
shimada@math.sci.hiroshima-u.ac.jp
}

\thanks{Partially supported by
 JSPS Grants-in-Aid for Scientific Research (C) No. 25400042,
 and
 JSPS Grants-in-Aid for Scientific Research (S) No. 22224001.
}

\begin{abstract}
Let  $X$ be the complex Fermat variety of dimension $n=2d$ and degree $m>2$.
We  investigate   the submodule of the middle homology group of $X$  with integer coefficients
generated by the classes of standard $d$-dimensional
subspaces contained in $X$,
and give
an algebraic (or rather combinatorial)
criterion for the primitivity of this submodule.
\end{abstract}



\subjclass[2000]{14F25, 14J70}
\keywords{Complex Fermat variety, middle homology group, Pham polyhedron}

\maketitle

\section{Introduction}
Unless specified otherwise, all (co-)homology groups are with coefficients
in~$\Z$.

Let $X$ be the complex Fermat variety
$$
z_0^m+\dots+z_{n+1}^m=0
$$
of dimension $n$ and degree $m>2$ in a projective space $\Pz^{n+1}$
with homogeneous coordinates $(z_0:\dots:z_{n+1})$.
Suppose that $n=2d$ is even.
Let $\JJJ$ be the set of
all unordered partitions of the index set $\index:=\{0,1,\ldots,n+1\}$
into unordered pairs, \ie,
lists
$$
J:=[[j_0, k_0], \dots, [j_d, k_d]]
$$
of pairs of indices such that
\begin{equation}\label{eq:condJ}
\{j_0, k_0, \dots, j_d, k_d\}=\index,\quad
j_i<k_i\;(i=0, \dots, d),
\quad
j_0<\dots<j_d,
\end{equation}
and let $\BBB$ be the set of $(d+1)$-tuples
$\beta=(\beta_0, \dots, \beta_d)$
of complex numbers $\beta_i$ such that $\beta_i^m=-1$.
(Note that we always have $j_0=0$.)
For $J\in \JJJ$ and $\beta\in \BBB$,
we define the \emph{standard $d$-space $L_{J, \beta}$}
to be  the
projective
subspace of $\Pz^{n+1}$ defined by the equations
\begin{equation}\label{eq:Lhomog}
z_{k_i}=\beta_i z_{j_i}\quad (i=0, \dots, d).
\end{equation}
The number of these spaces equals $(2d+1)!!\,m^{d+1}$,
where $(2d+1)!!$ is the product of all
\emph{odd} numbers from~$1$ to $(2d+1)$.
Each standard $d$-space
$L_{J, \beta}$ is contained in $X$,
and hence we have its class $[L_{J, \beta}]$ in the middle homology group $H_n(X)$ of $X$.
Let $\LLL(X)$ denote the $\Z$-submodule of $H_n(X)$ generated by the classes $[L_{J, \beta}]$
of all standard $d$-spaces.
\par
In the case $n=2$,
the problem to determine whether   $\LLL(X)$ is primitive in $H_n(X)$  or not
was raised by Aoki and Shioda~\cite{MR717587}
in the study of the Picard groups of Fermat surfaces.
In degrees~$m$ prime to~$6$,
the primitivity of $\LLL(X)$ implies that the Picard group of~$X$ is
generated by the classes of the lines contained in~$X$.
Sch\"{u}tt, Shioda and van Luijk~\cite{MR2653207}
studied this problem  using the  reduction of $X$ at supersingular primes.
Recently,
the first author of the present article
solved in~\cite{arXiv13053073} this problem affirmatively by means of  the Galois covering  $X\to \P^2$
and  the method of Alexander modules.
\par
The purpose of this paper is to study
the subgroup $\LLL(X)\subset H_n(X)$
for higher-dimensional Fermat varieties.
For a non-empty subset $\KKK$ of $\JJJ$,
we denote by
$\LLLKKK(X)$
the
$\Z$-submodule
of $H_n(X)$ generated by the classes $[L_{J, \beta}]$,
where $J\in \KKK$ and $\beta\in \BBB$.
\par
To state our results,
we prepare several polynomials in
$\Z[t_{1}, \dots, t_{n+1}]$,
rings, and modules.
We put
 $$
 \phim(t):=t^{m-1}+\cdots + t +1,
 \quad
 \rhom(x, y):=\sum_{\mu=0}^{m-2} x^{\mu} \left(\sum_{\nu=0}^{\mu}y^{\nu}\right).
 $$
For $J=[[j_0, k_0], \dots, [j_d, k_d]] \in \JJJ$,
 we put
\begin{eqnarray*}
 \tau_J &:=& (t_{k_0}-1)\cdots  ( t_{k_d}-1), \\
 \psi_J &:=& \tau_J\cdot\phi(t_{j_1}t_{k_1})\cdots\phi(t_{j_d}t_{k_d}), \\
 \rhomJ &:=& \rhom(t_{j_1}, t_{k_1}) \cdots \rhom(t_{j_d}, t_{k_d}).
\end{eqnarray*}
Consider the ring
$$
\GLp:=\Z[t_0^{\pm1},\ldots,t_{n+1}^{\pm1}]/(t_0\ldots t_{n+1}-1)=\Z[t_1^{\pm1},\ldots,t_{n+1}^{\pm1}]
$$
 of Laurent
polynomials
and let
\begin{eqnarray*}
R&:=&\GLp/(t_0^m-1, \dots, t_{n+1}^m -1)=\Z[t_{1}, \dots, t_{n+1}]/(t_1^m-1, \dots, t_{n+1}^m -1), \\
\barR&:=&R/( \phim(t_0), \dots, \phim(t_{n+1}))=\Z[t_{1}, \dots, t_{n+1}]/( \phim(t_1), \dots, \phim(t_{n+1})).
\end{eqnarray*}
For $J=[[j_0, k_0], \dots, [j_d, k_d]] \in \JJJ$,
 we put
\begin{eqnarray*}
R_J&:=&R/ (t_{j_1}t_{k_1}-1, \dots, t_{j_d}t_{k_d}-1), \\
\barR_J&:=&\barR/ (t_{j_1}t_{k_1}-1, \dots, t_{j_d}t_{k_d}-1).
\end{eqnarray*}
Note that we always have $t_{j_0}t_{k_0}-1=0$ in $R_J$ and $\barR_J$.
The
multiplicative identities of these rings, \ie, the images of
$1\in\GLp$ under the quotient projection, are denoted by $1_J$.
\par
Our primary concern is the structure of the \emph{abelian group}
$H_n(X)/ \LLLKKK(X)$. 
For this reason, whenever speaking about the
\emph{torsion} of an abelian group~$A$, we always mean its $\Z$-torsion
$\Tors A:=\Tors_{\Z}A$, even
if $A$ happens to be an $R$- or $\barR$-module.
(Over $R$, almost all our modules have torsion.)
Respectively, $A$ is said to be \emph{torsion free} if its $\Z$-torsion
$\Tors_{\Z}A$ is trivial.
\par
Our main results are as follows.
%
\begin{theorem}[see Section~\ref{sec:LLL}]\label{thm:main}
Let $\KKK$ be a non-empty subset of $\JJJ$.
Then the torsion of  the quotient module $H_n(X)/ \LLLKKK(X)$ is isomorphic
to the torsions of
any of
the following modules:
\begin{itemize}
\item[(a)]
the ring
$R/\ideal(\psi_J | J \in \KKK)$,
where $\ideal(\psi_J | J \in \KKK)$
is the ideal of $R$ generated by $\psi_J$
with $J$ running  through $\KKK$,
\item[(b)]
the ring
$\barR/\ideal(\rhomJ | J \in \KKK)$, where $\ideal(\rhomJ | J \in \KKK)$
is the ideal of $\barR$ generated by $\rhomJ$
with $J$ running  through $\KKK$,
\item[(c)]
the $R$-module
$$
\module_{\KKK}:=\left(\bigoplus_{J\in \KKK} R_J\right) / \MMM,
$$
where $\MMM$ is the  $R$-submodule of $\bigoplus_{J\in \KKK} R_J$
generated by $\sum _{J\in \KKK} \tau_J 1_J $,
\item[(d)]
the $\barR$-module
$$
\bmodule_{\KKK}:=\left(\bigoplus_{J\in \KKK} \barR_J\right) / \widebar{\MMM},
$$
where $\widebar{\MMM}$ is the  $\barR$-submodule of $\bigoplus_{J\in \KKK} \barR_J$
generated by $\sum _{J\in \KKK}1_J $.
\end{itemize}
\end{theorem}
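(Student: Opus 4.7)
The plan is to establish description (c) directly by a topological computation, and then deduce (a), (b) and (d) by algebraic manipulation of the rings $R$ and $\barR$.

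\emph{Step 1 (Pham polyhedron setup).} I would start by invoking the classical Pham description of the middle homology of the affine Fermat Milnor fibre $F=\{z_0^m+\dots+z_{n+1}^m=1\}$: as a $\Z$-module, $H_n(F)$ is free of rank $(m-1)^{n+2}$ and carries the natural diagonal action of $G=(\mu_m)^{n+2}$. Writing $t_i$ for the generator corresponding to the $i$-th coordinate, one identifies $H_n(F)$ with the principal ideal $\prod_i(t_i-1) R$. Descending from $F$ to the projective variety $X$ via the Wang/Thom--Gysin sequence (combined with the diagonal relation $t_0\cdots t_{n+1}=1$) produces an $R$-module presentation of $H_n(X)$, modulo a torsion-free summand generated by the hyperplane class.

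\emph{Step 2 (identifying the classes $[L_{J,\beta}]$).} Each standard $d$-space $L_{J,\beta}$ can be realized as a join of $d+1$ elementary ``line cycles'', one per constrained pair $[j_i,k_i]$. A direct computation in the Pham polyhedron identifies each line cycle with a specific element of $R_J$, and multiplying these over $i$ shows that, for a canonical choice $\beta_0$, the class $[L_{J,\beta_0}]$ equals $\tau_J\cdot 1_J\in R_J$; the remaining classes $[L_{J,\beta}]$ arise as the $G$-translates of this element. Summing over $J\in\KKK$, the subgroup $\LLLKKK(X)$ is identified with the image of $\bigoplus_{J\in\KKK}R_J\cdot\tau_J$ in $H_n(X)$, and the single extra relation $\sum_{J\in\KKK}\tau_J1_J$ defining $\module_\KKK$ reflects the unique homology relation among the standard subspaces, namely that a certain total class is a multiple of the hyperplane class (which is killed in the quotient). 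This delivers description~(c).

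\emph{Step 3 (algebraic reduction to (a), (b), (d)).} The remaining three equivalences are purely algebraic and rely on the identity $(t-1)\phim(t)=t^m-1=0$ in $R$. For (c)$\,\Leftrightarrow\,$(a) I would exhibit the $R$-linear map $\module_\KKK\to R/\ideal(\psi_J\mid J\in\KKK)$ sending $1_J\mapsto \phim(t_{j_1}t_{k_1})\cdots\phim(t_{j_d}t_{k_d})$; this is well-defined precisely because $(t_{j_i}t_{k_i}-1)\phim(t_{j_i}t_{k_i})=0$ in $R$, and a snake-lemma chase (using that the kernels and cokernels of multiplication by $\tau_J$ and $\phim(t_{j_i}t_{k_i})$ are controlled modulo $\Z$-torsion) shows that it induces an isomorphism on $\Tors$. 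The equivalence (a)$\,\Leftrightarrow\,$(b) follows by reducing modulo the $\phim(t_i)$ and noting, via the same identity, that $\psi_J$ and $\rhomJ$ are compatible under this reduction up to torsion-free corrections. Finally, (b)$\,\Leftrightarrow\,$(d) is strictly parallel to (c)$\,\Leftrightarrow\,$(a), but over $\barR$.

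\emph{Main obstacle.} The heart of the proof is Step~2: computing the precise Pham-cycle representative of $[L_{J,\beta}]$ with correct signs and orientations, and verifying that the only homology relation between these classes --- modulo the hyperplane class --- is the single relation $\sum\tau_J 1_J=0$. All of the geometry is encoded there; once Step~2 is in place, Steps~1 and~3 are essentially formal.
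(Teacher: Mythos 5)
Your Step~2 — which you rightly flag as the heart of the proof — contains a genuine gap, and as sketched it cannot work. You propose to ``directly'' identify the class $[L_{J,\beta_0}]\in H_n(X)$ with the element $\tau_J\cdot 1_J\in R_J$ and to conclude that the only relation among the $[L_{J,\beta}]$, modulo the hyperplane class, is $\sum_J\tau_J 1_J=0$, ``delivering description~(c)''. This would mean that the map $\bigoplus_J R_J\to H_n(X)/\Z P_X$ has kernel exactly the cyclic submodule $R\cdot\sum_J\tau_J 1_J$, so that $\module_\KKK$ \emph{embeds} into the torsion-free module $H_n(X)/\Z P_X\cong V_n(X)\dual$. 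But then $\module_\KKK$ would be forced to be torsion-free, which defeats the theorem: the whole point is that $\module_\KKK$ can have torsion (that is the quantity being computed), and the content of Conjecture~\ref{conj:main} is precisely about when it vanishes. Also, the classes $[L_{J,\beta}]$ live in $H_n(X)$, while $R_J$ arises from $H^n(L_\KKK)$; there is no direct inclusion or presentation identifying one with the other.

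The missing ingredient is duality. The paper does \emph{not} identify $\LLLKKK(X)$ or its cokernel directly; rather, it uses the observation that for a map $\varphi\colon M_1\to M_2$ of free $\Z$-modules one has $\Tors\Coker(\varphi)\cong\Tors\Coker(\varphi\dual)$, and the dual map $\varphi\dual\colon H_n(X)\to H^n(L_\KKK)=\bigoplus_J R_J$ (precomposed with $R\surj V_n(X)\inj H_n(X)$) is exactly what the intersection numbers $\intp{L_{J,\beta},g(S)}$ of Theorem~\ref{thm:LS} compute: it sends $1\mapsto\sum_J\tau_J c_J$. The element $\tau_J$ appears as a matrix entry of $\varphi\dual$, not as the image of $1_J$ under $\varphi$ itself. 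Without this dualization you have no handle on the kernel of $\varphi$ (which is what controls $\Coker\varphi$); the paper never needs to determine that kernel.

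A secondary point: your passage (c)$\Rightarrow$(a) via the map $1_J\mapsto\phim(t_{j_1}t_{k_1})\cdots\phim(t_{j_d}t_{k_d})$ is a departure from the paper, which proves (a) and (c) \emph{independently}, each by its own dualization (pairing against the Pham cycle~$S$ for (a), restriction to $L_\KKK$ for (c)); the torsions of (a) and (c) are then related only through the non-canonical isomorphism $A\cong\Hom_{\Z}(A,\Q/\Z)$. Your map is well-defined, but a ``snake-lemma chase'' does not obviously make it an isomorphism on torsion, and I see no reason a priori why a single $R$-linear map should realize what the paper achieves by the abstract finite-group duality. The reductions (a)$\leftrightarrow$(b) and (c)$\leftrightarrow$(d), by contrast, do proceed essentially as you describe, using $\psi_J=\pm\lambda\rhomJ$ and the $\Z$-freeness of $R/(\lambda)$ (Lemma~\ref{lem:A}).
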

In particular,
we assert that the torsion parts of all four modules
listed in Theorem~\ref{thm:main}
are isomorphic, although
not always canonically: sometimes, we use the abstract
isomorphism $A\cong\Hom_{\Z}(A,\Q/\Z)$ for a finite abelian group~$A$,
see Section~\ref{subsec:proofcd} for details.
It is worth mentioning that,
according
to~\cite{arXiv13070382},
in the case $d=2$ of Fermat surfaces, the \latin{a priori} more complicated
module dealt
with in~\cite{arXiv13053073} (which was found by means of a completely different
approach)
is isomorphic to the one that is
given in  Theorem~\ref{thm:main}(c).
\begin{conjecture}\label{conj:main}
If $\KKK=\JJJ$, the group $H_n(X)/ \LLLKKK(X)$ is torsion free.
\end{conjecture}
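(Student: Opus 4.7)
The plan is to use formulation~(b) of Theorem~\ref{thm:main}: show that $\barR/\mathfrak{a}$ is torsion free, where $\mathfrak{a}:=\ideal(\rhomJ | J \in \JJJ)$. Since $\barR/\mathfrak{a}$ is a finitely generated abelian group, it suffices to verify that, for every prime $p$,
$\dim_{\F_p}((\barR/\mathfrak{a})\tensor\F_p)=\dim_{\Q}((\barR/\mathfrak{a})\tensor\Q)$;
the inequality $\ge$ is automatic, so the real task is the reverse bound.

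I would first carry out the rational computation. Tensoring with $\Q$, the ring $\barR\tensor\Q$ splits as a product of cyclotomic fields $K_{\chi}$ indexed by Galois orbits of characters $\chi\colon(\Z/m)^{n+1}\to\C^{\times}$ all of whose components are non-trivial. In each block, the image of a pair-factor $\rhom(t_{j_i},t_{k_i})$ is an explicit cyclotomic quantity that vanishes precisely when $\chi(t_{j_i}t_{k_i})=1$, so $(\barR/\mathfrak{a})\tensor\Q$ retains exactly those $\chi$ for which, for \emph{every} $J\in\JJJ$, at least one pair-factor of $\rhomJ$ is a unit in $K_{\chi}$. This combinatorial count should match the expected corank of $\LLLKKK(X)$ in $H_{n}(X)$ when $\KKK=\JJJ$, a number recoverable from the classical character decomposition of the middle cohomology of the Fermat variety.

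For the mod~$p$ bound I would separate two cases. When $p\nmid m$, the ring $\barR\tensor\F_{p}$ again becomes a product of fields after extending scalars to a sufficiently large finite field, and the character-by-character argument above goes through verbatim. When $p\mid m$, the ring $\barR\tensor\F_{p}$ acquires a non-trivial nilpotent radical generated by the classes of $t_{i}-1$, and the cyclotomic splitting collapses. I would filter $\barR\tensor\F_{p}$ by powers of $(t_{1}-1,\dots,t_{n+1}-1)$ and analyse $\mathfrak{a}$ through the associated graded. Crucially, because $\KKK=\JJJ$ is the \emph{full} set of pair-partitions, the ideal $\mathfrak{a}$ is stable under the natural $S_{n+2}$-action permuting $t_{0},\dots,t_{n+1}$; this symmetry, combined with an induction on $d$ that conditions on which index is paired with $0$ in a given $J$, should ultimately reduce the estimate to the analogous assertion in dimension $2(d-1)$.

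The main obstacle is plainly the wild case $p\mid m$. At such primes the character decomposition is unavailable and one must track how the combinatorial structure of the family $\{\rhomJ\}$ interacts with the nilpotent part of $\barR\tensor\F_{p}$. Isolating, at each inductive step, the leading term of $\rhomJ$ in the $(t_{i}-1)$-adic filtration, and exploiting the full $S_{n+2}$-symmetry of $\JJJ$ (the Pham-polyhedron combinatorics alluded to in the keywords), appears to be the only reasonable route; making this control work uniformly in all residue characteristics is presumably the reason the statement appears here as a conjecture rather than a theorem.
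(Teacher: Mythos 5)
This statement is labelled a \emph{conjecture} in the paper, and the paper does not prove it: the authors write explicitly that Theorem~\ref{thm:main} ``reduces Conjecture~\ref{conj:main} to a purely algebraic (or even combinatorial) question'' but that it ``remains open,'' supported only by the cases $d=0$ and $d=1$ and by computer experiments for small $(n,m)$ listed in Section~\ref{sec:comp}. So there is no proof in the paper to compare yours against; what you have written is (appropriately) an outline of an attack rather than an argument, and you correctly acknowledge this in your closing sentence.

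That said, the framework you set up does match the paper's own reduction. Working with $\barR/\ideal(\rhomJ|J\in\JJJ)$ via Theorem~\ref{thm:main}(b), and testing torsion-freeness by comparing $\dim_{\F_p}$ with $\dim_{\Q}$, is exactly the computational criterion formulated in Section~\ref{sec:comp}. Your observation that $p\nmid m$ is harmless is precisely Corollary~\ref{cor:primem}, which the paper proves by the semisimple-character argument you describe (Claim~\ref{claim:Kp}). And you correctly isolate $p\mid m$ as the real obstruction.

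Where the sketch has genuine gaps: the ``filter by powers of $(t_1-1,\dots,t_{n+1}-1)$ and induct on $d$'' step is not made to work, and there are specific reasons to doubt it is straightforward. First, the base case $d=1$ was not settled by elementary commutative-algebra or Gr\"obner manipulations; it required the Alexander-module / branched-covering machinery of~\cite{arXiv13053073}, which your sketch does not invoke. Second, the paper's only rigorous dimensional induction, Corollary~\ref{cor:Wext}, runs in the opposite direction: it \emph{uses} the conjecture in dimension $2s$ to deduce primitivity for the restricted set $\JJJ_s\subsetneq\JJJ$ in higher dimension, via a tensor decomposition $\bmodule_{\JJJ_s}(2d)\cong\bmodule_{\JJJ(2s)}(2s)\tensor_\Z\widebar S(s,d)$. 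Passing from the full $\JJJ$ in dimension $2d$ down to dimension $2(d-1)$ is a different and harder move, since conditioning on the partner of the index $0$ produces overlapping subsets of $\JJJ$, not a clean direct-sum or tensor decomposition of $\bmodule_{\JJJ}$. So the $S_{n+2}$-symmetry plus $(t_i-1)$-adic filtration idea, while reasonable to try, is at present no more than a heuristic, and the statement should be treated as open.
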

This conjecture is supported by some numerical evidence (see
Section~\ref{sec:comp} for details) and by the fact that it holds in the
cases
$d=0$ (obvious) and $d=1$ (see~\cite{arXiv13053073}).
%
%
Theorem~\ref{thm:main}
reduces Conjecture~\ref{conj:main} to a purely
algebraic (or even combinatorial) question. However, for the moment it
remains open.
\begin{definition}\label{def:gamma}
Let $\mu_m$ be the subgroup $\shortset{z\in\C}{z^m=1}$ of $\C\sptimes$.
Denote by $\Gamma_{\KKK}$ the subset of
$\mu_m^{n+1}=\Spec (R\tensor\C)$
consisting of the elements $(a_1, \dots, a_{n+1})\in \mu_m^{n+1}$
such that $a_i\ne 1$ for $i=1, \dots, n+1$ and that
there exists $J=[[j_0, k_0], \dots, [j_d, k_d]]\in \KKK$ such that
$a_{j_{i}} a_{k_{i}}=1$ hold for $i=1, \dots,  d$.
\end{definition}
\begin{theorem}[see Section~\ref{subsec:proofrank}]\label{thm:mainrank}
For any  non-empty subset $\KKK$ of $\JJJ$,
the rank of
the group
$\LLLKKK(X)$ is equal to $|\Gamma_{\KKK}|+1$.
\end{theorem}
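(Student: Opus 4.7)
My plan is to compute $\rank \LLLKKK(X)=\dim_\C\LLLKKK(X)\otimes\C$ directly via the eigenspace decomposition of $H_n(X,\C)$ under the Galois group $G:=\mu_m^{n+2}/\mathrm{diag}(\mu_m)$ of the projection $X\to\P^n$, $[z_0:\cdots:z_{n+1}]\mapsto[z_0^m:\cdots:z_{n+1}^m]$. The characters of $G$ are tuples $\alpha=(\alpha_0,\dots,\alpha_{n+1})\in(\Z/m\Z)^{n+2}$ with $\sum_i\alpha_i\equiv 0\pmod m$; by the classical theorem of Pham one has $H_n(X,\C)=\bigoplus_\alpha V_\alpha$, with $V_\alpha$ one-dimensional if $\alpha=0$ (in which case $V_0=\C\,h^d$ is spanned by the power of the hyperplane class) or every $\alpha_i\not\equiv 0\pmod m$, and $V_\alpha=0$ otherwise.

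The geometric input is that, for $J=[[j_0,k_0],\dots,[j_d,k_d]]\in\JJJ$, the subspace $L_{J,\beta}$ is preserved setwise precisely by $H_J:=\{\zeta\in\mu_m^{n+2}:\zeta_{j_i}=\zeta_{k_i}\text{ for all }i\}$. Hence $G$ acts transitively on $\{L_{J,\beta}:\beta\in\BBB\}$ with stabilizer $H_J/\mathrm{diag}(\mu_m)$, and the span $W_J:=\sum_\beta\C[L_{J,\beta}]$ sits inside $\bigoplus_{\alpha\in T_J}V_\alpha$, where
$$T_J:=\{\alpha:\alpha_{j_i}+\alpha_{k_i}\equiv 0\pmod m\text{ for every }i=0,\dots,d\}$$
is exactly the set of characters trivial on $H_J/\mathrm{diag}(\mu_m)$. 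The main technical step, and the one I expect to be the chief obstacle, is to prove the reverse inclusion, i.e.\ that each non-vanishing $V_\alpha$ with $\alpha\in T_J$ actually occurs in $W_J$. Because $W_J$ is a $G$-submodule of a sum of pairwise distinct one-dimensional characters, this reduces to showing that the projection of $[L_{J,\beta_0}]$ onto each such $V_\alpha$ is non-zero: for $V_0$ this is immediate from $[L_{J,\beta_0}]\cdot h^d=\deg L_{J,\beta_0}=1$, while for $\alpha\neq 0$ I would either compute a Pham-type period integral of $L_{J,\beta_0}$ against an explicit cycle generating $V_{-\alpha}$, or argue inductively using a decomposition of $X$ along fibrations in lower-dimensional Fermat varieties, where the one-dimensional case is classical.

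Granting this, $\LLLKKK(X)\otimes\C=\bigoplus_{\alpha\in T_\KKK,\;V_\alpha\neq 0}V_\alpha$ with $T_\KKK:=\bigcup_{J\in\KKK}T_J$, and it remains to match this count with $|\Gamma_\KKK|+1$. I would identify $\alpha$ with $(a_1,\dots,a_{n+1})\in\mu_m^{n+1}$ by fixing a primitive $m$-th root $\zeta$ and setting $a_i:=\zeta^{\alpha_i}$, so that $a_0=\prod_{i\geq 1}a_i^{-1}$ and $\sum\alpha_i\equiv 0$ is automatic; under this dictionary $\alpha_{j_i}+\alpha_{k_i}\equiv 0$ becomes $a_{j_i}a_{k_i}=1$ and $\alpha_\ell\not\equiv 0$ becomes $a_\ell\neq 1$. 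Because $j_0=0$, the identity $\prod_\ell a_\ell=1$ together with $a_{j_i}a_{k_i}=1$ for $i=1,\dots,d$ forces the remaining pair relation $a_0a_{k_0}=1$ automatically, and then $a_0=a_{k_0}^{-1}\neq 1$ follows from $a_{k_0}\neq 1$. Thus the non-trivial characters in $T_\KKK\cap\{V_\alpha\neq 0\}$ correspond exactly to $\Gamma_\KKK$; adding the contribution of $V_0$ yields $\rank\LLLKKK(X)=|\Gamma_\KKK|+1$.
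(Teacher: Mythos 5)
Your overall strategy is close in spirit to the paper's: both rely on the $G$-action, a rank reduction to a counting problem over $\C$, and an explicit nonvanishing computation. The difference is mostly one of language — you work with the eigenspace decomposition $H_n(X,\C)=\bigoplus_\alpha V_\alpha$, whereas the paper works with the $R=\Z[G]$-module structure, identifies $\LLLKKK'(X)$ with the ideal $\ideal(\psi_J|J\in\KKK)$ of $R$ via the pairing with the Pham cycle $S$ (Theorem~\ref{thm:LS} and Section~\ref{subsec:proofa}), and then counts points of $\Spec(R\otimes\C)$ outside $\Spec(A_\KKK\otimes\C)$. Your bookkeeping at the end — translating $\alpha\in T_J$ with all $\alpha_i\ne0$ into the condition defining $\Gamma_\KKK$, and observing that the relation $a_0a_{k_0}=1$ and $a_0\ne1$ are automatic consequences of $\prod a_\ell=1$ and the other conditions — is exactly right and matches Claim~4.4 in the paper.

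The issue is the step you yourself flag as ``the chief obstacle'': proving that for every $\alpha\in T_J$ with $V_\alpha\ne0$, the projection of $[L_{J,\beta}]$ to $V_\alpha$ is nonzero. This is not a minor detail; it is precisely the analytic/topological content of Theorem~\ref{thm:LS}, which computes $\intp{L_{J,\beta},S}$ explicitly and thereby pins down the image of $[L_{J,\beta}]$ under $H_n(X)\to V_n(X)\dual\cong R$ (up to the hyperplane class it equals $\pm\psi_J$, whose nonvanishing at a character $\alpha$ is exactly the needed projection statement). Your proposal only sketches two possible routes (a Pham-type period computation, or induction along fibrations) without carrying either out; as written this is a genuine gap, since without it you only get the inclusion $W_J\subseteq\bigoplus_{\alpha\in T_J}V_\alpha$, hence only the inequality $\rank\LLLKKK(X)\le|\Gamma_\KKK|+1$, not the equality. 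A second, smaller point worth making precise: your justification that $W_J$ lies in $\bigoplus_{\alpha\in T_J}V_\alpha$ uses that the one-dimensional $V_\alpha$'s occur with multiplicity at most one in $H_n(X,\C)$ (so that any $H_J$-invariant subrepresentation must be supported on characters trivial on $H_J$); this is true by Pham's theorem but should be stated, because otherwise a $G$-orbit of a single cycle could \emph{a priori} meet a character $\alpha\notin T_J$ occurring with higher multiplicity.
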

As a corollary, we obtain the following
statement,
which is a higher-dimensional generalization of Corollary 4.4 of~\cite{MR2653207}:
\begin{corollary}[see Section~\ref{subsec:proofrank}]\label{cor:primem}
For any  non-empty subset $\KKK$ of $\JJJ$,
the order of the torsion of $H_n(X)/\LLLKKK(X)$
may be divisible only by those primes that divide~$m$.
\end{corollary}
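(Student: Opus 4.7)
The plan is to combine Theorem~\ref{thm:main}(b), which identifies the torsion of $H_n(X)/\LLLKKK(X)$ with the torsion of the ring $A:=\barR/\ideal(\rhomJ | J\in\KKK)$, with a local analysis of $\barR$ at each prime $p\nmid m$. It then suffices to prove that $A\tensor\Z_{(p)}$ is torsion free as a $\Z_{(p)}$-module for every such~$p$.

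First I would observe that $\barR\tensor\Z_{(p)}$ is a finite \'etale $\Z_{(p)}$-algebra. Indeed, for $p\nmid m$ the polynomial $\phim(t)=(t^m-1)/(t-1)$ is separable modulo~$p$, so $\Z_{(p)}[t]/\phim(t)$ is \'etale over $\Z_{(p)}$; tensoring over the $n+1$ variables preserves \'etaleness. Consequently $\barR\tensor\Z_{(p)}$ decomposes as a finite product $\prod_{\alpha}\mathcal{O}_{\alpha}$, in which each $\mathcal{O}_{\alpha}$ is a discrete valuation ring with uniformizer~$p$ and residue field a finite extension of~$\F_p$.

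The heart of the argument is the claim that, at every geometric point $\mathfrak{p}=(a_1,\dots,a_{n+1})\in(\mu_m\setminus\{1\})^{n+1}$ of $\barR\tensor\overline{\Q}$, the value $\rhomJ(\mathfrak{p})$ is either $0$ or a unit in the factor $\mathcal{O}_{\alpha}$ containing $\mathfrak{p}$. A direct telescoping computation shows that, for $a,b\in\mu_m\setminus\{1\}$,
\[
\rhom(a,b)=
\begin{cases}
0, & \text{if } ab\ne 1,\\
-m/(a-1), & \text{if } ab=1,
\end{cases}
\]
so $\rhomJ(\mathfrak{p})$ vanishes unless $a_{j_i}a_{k_i}=1$ for all $i=1,\dots,d$, in which case $\rhomJ(\mathfrak{p})=\pm m^d/\prod_{i=1}^{d}(a_{j_i}-1)$. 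Since $p\nmid m$, the integer~$m$ is a unit of $\Z_{(p)}$, and the standard fact that the norm of $\zeta-1$ over~$\Z$ divides~$m$ for $\zeta\in\mu_m\setminus\{1\}$ makes each factor $a_{j_i}-1$ a unit in $\mathcal{O}_{\alpha}$.

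Within each $\mathcal{O}_{\alpha}$ the geometric points lying above $\Spec\mathcal{O}_{\alpha}$ form a single Frobenius orbit, so by the above computation the image of each generator $\rhomJ$ in $\mathcal{O}_{\alpha}$ is either $0$ or a unit. Therefore $I_{\alpha}:=\ideal(\rhomJ | J\in\KKK)\cap\mathcal{O}_{\alpha}$ is either $(0)$ or all of $\mathcal{O}_{\alpha}$, and
\[
A\tensor\Z_{(p)}=\prod_{\alpha}\mathcal{O}_{\alpha}/I_{\alpha}
\]
is a product of DVRs and zero rings, hence torsion free over $\Z_{(p)}$. Since this holds for every $p\nmid m$, $\Tors A$ is supported only at primes dividing~$m$. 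The principal technical point will be to verify carefully that the nonzero values of $\rhomJ$ really are units in $\mathcal{O}_{\alpha}$ rather than merely in its fraction field; this rests on both the \'etaleness of $\barR\tensor\Z_{(p)}$ and the unit property of $\zeta-1$, each of which requires $p\nmid m$.
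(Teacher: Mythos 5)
Your proof is correct and reaches the same conclusion, but by a genuinely different route. The paper works with formulation~(a) and the ring $A_{\KKK}=R/\ideal(\psi_J|J\in\KKK)$, comparing $\dim_{K_p}(A_{\KKK}\tensor K_p)$ over algebraically closed fields $K_p$ of characteristic $p\geq 0$: when $p=0$ or $p\nmid m$, the group ring $R\tensor K_p$ is semisimple split, so one simply counts the closed points of $\Spec(A_{\KKK}\tensor K_p)$ and observes that the count (namely $m^{n+1}-|\Gamma_{\KKK}|$) is the same in all such characteristics, whence no $p$-torsion. You instead work with formulation~(b) and argue integrally, exhibiting $\barR\tensor\Z_{(p)}$ as a product of unramified extensions of $\Z_{(p)}$ and showing by the telescoping identity $\rhom(a,a^{-1})=-m/(a-1)$ that the image of each $\rhomJ$ in each factor is either zero or a unit, so the quotient by $\ideal(\rhomJ)$ stays flat. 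Both arguments hinge on the same arithmetic input, the separability of $\phim$ modulo $p\nmid m$; the paper's version is a transparent dimension count while yours gives a sharper structural picture (the ideal $\ideal(\rhomJ|J\in\KKK)$ is literally a direct factor of $\barR\tensor\Z_{(p)}$). One small technical point in your writeup: over the non-Henselian base $\Z_{(p)}$ the factors of a finite \'etale algebra are normal semi-local Dedekind domains rather than DVRs in general (a prime of $\Q(\zeta_m)$ over $p$ can split); either pass to $\Z_p$ where the decomposition into DVRs is genuine, or note that the ``zero or unit'' dichotomy holds just as well in each semi-local factor because every $a_i-1$ reduces to a nontrivial $m$-th root of unity minus $1$ in every residue field, which is nonzero since $\phim(1)=m$ is prime to~$p$.
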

Applying Theorem~\ref{thm:main} to a subset $\KKK$ consisting of a single element
and using a deformation from  $X$,
we also prove the following generalization of Theorem 1.4 of~\cite{arXiv13053073}.
Let $f_i(x, y)$ be a homogeneous binary form of degree $m$
for $i=0, \dots, d$.
Suppose that the hypersurface $W$ in $\P^{n+1}$ defined by
\begin{equation}\label{eq:eqW}
f_0 (z_0, z_1) + f_1(z_2, z_3)+\cdots+ f_d (z_n, z_{n+1})=0
\end{equation}
is smooth. Then each $f_i(x, y)=0$ has $m$ distinct zeros
$(\alpha^{(i)}_{1}: \beta^{(i)}_{1}), \dots, (\alpha^{(i)}_{m}: \beta^{(i)}_{m})$
on $\P^1$.
Consider the points
$$
P^{(i)}_{\nu}:=(0: \dots: \underset{(2 i)}{\alpha^{(i)}_{\nu}}:   \underset{(2 i+1)}{\beta^{(i)}_{\nu}}:\dots :0)
$$
of $\P^{n+1}$.
Then, for each $(d+1)$-tuple $(\nu_0, \dots, \nu_{d})$ of integers $\nu_i$ with $1\le \nu_i  \le m$,
the $d$-space $L\sprime_{(\nu_0, \dots, \nu_{d})}$
spanned by $P^{(0)}_{\nu_0},  \dots, P^{(d)}_{\nu_d}$ is contained in $W$.
\begin{corollary}[see Section~\ref{subsec:proofcorW}]\label{cor:W}
The submodule of $H_n(W)$ generated by the classes
$[L\sprime_{(\nu_0, \dots, \nu_{d})}]$
of the $m^{d+1}$
subspaces $L\sprime_{(\nu_0, \dots, \nu_{d})}$ contained in $W$ is
of rank $(m-1)^{d+1}+1$ and is primitive
in $H_n(W)$.
\end{corollary}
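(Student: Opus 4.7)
The plan is to apply Theorem~\ref{thm:main} and Theorem~\ref{thm:mainrank} to the singleton $\KKK = \{J_0\}$, where $J_0:=[[0,1],[2,3],\ldots,[n,n+1]]$, in order to settle the statement for the Fermat variety~$X$, and then propagate it to a general smooth~$W$ of the form~(\ref{eq:eqW}) by a deformation argument. A direct computation shows that, when each $f_i(x,y)$ is specialised to $x^m+y^m$, every $L\sprime_{(\nu_0,\dots,\nu_d)}$ is cut out by equations $z_{2i+1} = (\beta^{(i)}_{\nu_i}/\alpha^{(i)}_{\nu_i})\,z_{2i}$ with $(\beta^{(i)}_{\nu_i}/\alpha^{(i)}_{\nu_i})^m = -1$, so that the $m^{d+1}$ subspaces under consideration coincide precisely with the standard $d$-spaces $L_{J_0,\beta}$, $\beta\in\BBB$. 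In particular $\LLL\sprime(X) = \LLL_{\{J_0\}}(X)$.

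For the singleton $\KKK=\{J_0\}$, the $\barR$-submodule $\widebar{\MMM}$ of $\barR_{J_0}$ is generated by $1_{J_0}$ alone and therefore equals the whole of $\barR_{J_0}$, so $\bmodule_{\{J_0\}}=0$. By Theorem~\ref{thm:main}(d) this gives that $H_n(X)/\LLL_{\{J_0\}}(X)$ is torsion free. For the rank, Theorem~\ref{thm:mainrank} reduces the problem to counting $\Gamma_{\{J_0\}}$: the constraints $a_i\ne 1$ for all $i$ together with $a_{2i}a_{2i+1}=1$ for $i=1,\dots,d$ leave $a_1,a_2,a_4,\dots,a_{2d}$ as free parameters, each ranging over $\mu_m\setminus\{1\}$. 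Hence $|\Gamma_{\{J_0\}}|=(m-1)^{d+1}$ and $\rank\LLL_{\{J_0\}}(X)=(m-1)^{d+1}+1$, which proves the corollary for $W=X$.

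To pass from the Fermat variety to an arbitrary smooth $W$ of the form~(\ref{eq:eqW}), I would introduce the parameter space $U$ of all $(d+1)$-tuples $(f_0,\dots,f_d)$ of binary forms of degree $m$ each having $m$ distinct roots. This $U$ is a Zariski-open subset of an affine space, hence irreducible and connected; and a quick Jacobian check shows that, under the distinct-roots assumption, any hypersurface of the form~(\ref{eq:eqW}) is automatically smooth, so there is a smooth projective family $p:\mathcal{W}\to U$ one of whose fibres is~$X$. The $d$-spaces $L\sprime_{(\nu_0,\dots,\nu_d)}$ vary algebraically with $u\in U$ and fit together into disjoint relative subschemes of $\mathcal{W}$ that are smooth over $U$. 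Consequently their fundamental classes $[L\sprime_\nu(u)]$ define flat sections of the local system $R^np_*\Z$, and Gauss--Manin parallel transport carries the submodule $\LLL\sprime(W_u)$ isomorphically from fibre to fibre. Both the rank of $\LLL\sprime(W_u)$ and the torsion-freeness of $H_n(W_u)/\LLL\sprime(W_u)$ are invariants of this parallel transport, so the connectedness of~$U$ and the case $W=X$ settled above yield the corollary for every smooth $W$ in the family.

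The main technical obstacle is the last step: rigorously justifying that the integral homology classes $[L\sprime_\nu(u)]$ constitute flat sections of the Gauss--Manin local system, and hence that rank and primitivity of the submodule they generate are locally constant on the connected base~$U$. Once this is granted, the rest of the argument reduces to the application of the two main theorems of this paper and the combinatorial count of $\Gamma_{\{J_0\}}$ recorded above.
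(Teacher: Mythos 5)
Your proposal is correct and follows essentially the same route as the paper: apply Theorem~\ref{thm:main}(d) with the singleton $\KKK=\{J_0\}$ (where $\bmodule_{\{J_0\}}=\barR_{J_0}/\barR\cdot 1_{J_0}=0$, giving primitivity), count $\Gamma_{\{J_0\}}$ via Theorem~\ref{thm:mainrank} to get the rank, and then transport both facts to a general smooth $W$ of the form~\eqref{eq:eqW} along a path in the connected parameter space via the Gauss--Manin local system. The flatness step you flag as the ``main technical obstacle'' is standard and is also taken for granted in the paper: the $L'_{(\nu_0,\dots,\nu_d)}$ form a smooth proper algebraic family of cycles over the base, so their integral fundamental classes are locally constant sections of $R^n p_* \Z$.
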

The
last statement can further be extended to what we call a
\emph{partial Fermat variety},
\ie, a hypersurface $W_{s}\subset\P^{n+1}$
given by equation~\eqref{eq:eqW} with
$$
f_0(x,y)=\cdots=f_{s}(x,y)=x^m+y^m
$$
and the remaining polynomials
distinct (pairwise and from $x^m+y^m$) and generic.
Such a variety contains $(2s+1)!!\,m^{d+1}$ projective linear subspaces~$L'_*$ of dimension~$d$:
each subspace can be obtained as the projective span of one of the
$s$-spaces in the Fermat variety
$$
X(2s):=W_{s}\cap\{z_{2s+2}=\cdots=z_{n+1}=0\}\subset\P^{2s+1}
$$
and one of the $(d-s)$-tuples of points
$P^{(s+1)}_{\nu_{s+1}},  \dots, P^{(d)}_{\nu_d}$ as above.
Then, we have the following conditional statement.
\begin{corollary}[see Section~\ref{subsec:proofcorW}]\label{cor:Wext}
Assume that the statement of Conjecture~\ref{conj:main} holds for Fermat
varieties of dimension~$2s\ge0$.
Then, for any $d\ge s$,
the submodule of $H_n(W_{s})$ generated by the classes
$[L'_*]$
of the
subspaces $L'_*$ contained in $W_{s}$ is primitive
in $H_n(W_{s})$. In particular, this submodule is primitive for $s=0$
or~$1$.
\end{corollary}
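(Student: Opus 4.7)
My plan is to deform $W_s$ to the full Fermat variety $X$ through a family of smooth hypersurfaces, identify the relevant sublattices under parallel transport, and then apply Theorem~\ref{thm:main}(d) to a subset $\KKK\subset\JJJ$ that encodes which partitions survive the deformation. Concretely, I would consider the family of hypersurfaces~\eqref{eq:eqW} with $f_0=\cdots=f_s=x^m+y^m$ fixed and $f_{s+1},\dots,f_d$ varying over binary forms of degree $m$; the locus of smooth members is Zariski open, hence connected, and contains both $X$ (all $f_i=x^m+y^m$) and $W_s$. Ehresmann's theorem along any path from $X$ to $W_s$ produces a diffeomorphism and hence an isomorphism $H_n(X)\cong H_n(W_s)$.

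Next I would take $\KKK\subset\JJJ$ to be the set of partitions whose last $d-s$ pairs are exactly $[2s+2,2s+3],\dots,[n,n+1]$. For such a $J$ and any $\beta\in\BBB$, the standard $d$-space $L_{J,\beta}\subset X$ is defined by the first $s+1$ Fermat linear equations together with equations of the form $z_{2i+1}=\beta_i z_{2i}$ ($i>s$) cutting out zeros of $x^m+y^m$. Those zeros persist as zeros of the nearby $f_i$, so $L_{J,\beta}$ extends to a subfamily of $d$-spaces contained in every member of the deformation. Parallel transport therefore sends $[L_{J,\beta}]$ to some $[L'_{(\nu_0,\dots,\nu_d)}]\in H_n(W_s)$, and matching the $(2s+1)!!\,m^{d+1}$ generators on both sides shows that the isomorphism $H_n(X)\cong H_n(W_s)$ carries $\LLLKKK(X)$ onto $\gen{[L'_*]}$. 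This reduces the assertion to proving that $H_n(X)/\LLLKKK(X)$ is torsion free.

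For the algebraic part I would compute $\Tors\bmodule_\KKK$ via Theorem~\ref{thm:main}(d). For $J\in\KKK$, in $\barR_J$ the relations $t_{2i}t_{2i+1}=1$ for $i>s$ let us eliminate $t_{2i+1}=t_{2i}^{m-1}$, leaving the variables $t_{2i}$ ($i>s$) subject only to $\phim(t_{2i})=0$. This gives a ring decomposition
$$
\barR_J\;\cong\;\barR^{(s)}_{J^{(s)}}\tensor_\Z A,\qquad A:=\Z[u_{s+1},\dots,u_d]/(\phim(u_{s+1}),\dots,\phim(u_d)),
$$
where $J^{(s)}$ is the partition of $\{0,1,\dots,2s+1\}$ formed by the first $s+1$ pairs of $J$ and $\barR^{(s)}$ is the analogue of $\barR$ for the Fermat variety $X(2s)$. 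Since $J\mapsto J^{(s)}$ identifies $\KKK$ with the full index set $\JJJ^{(s)}$ of $X(2s)$ and the diagonal element $\sum_J 1_J$ corresponds to $(\sum_{J^{(s)}}1_{J^{(s)}})\tensor 1_A$, I obtain
$$
\bmodule_\KKK\;\cong\;\bmodule^{(s)}_{\JJJ^{(s)}}\tensor_\Z A.
$$
As $A$ is $\Z$-free of rank $(m-1)^{d-s}$, flatness gives $\Tors\bmodule_\KKK\cong(\Tors\bmodule^{(s)}_{\JJJ^{(s)}})\tensor_\Z A$. Under the assumed Conjecture~\ref{conj:main} for $X(2s)$ and Theorem~\ref{thm:main}(d) applied to $X(2s)$, $\Tors\bmodule^{(s)}_{\JJJ^{(s)}}=0$, whence $\Tors\bmodule_\KKK=0$, as required. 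The cases $s=0$ and $s=1$ follow because the conjecture is trivial for a $0$-dimensional Fermat variety (just $m$ reduced points) and is established in~\cite{arXiv13053073} for Fermat surfaces.

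The main obstacle I anticipate is Step~2, the verification that parallel transport identifies $\LLLKKK(X)$ precisely with $\gen{[L'_*]}$. While the generators correspond up to the natural parametrization $\beta_i\leftrightarrow(\alpha^{(i)}_{\nu_i}:\beta^{(i)}_{\nu_i})$, to make this correspondence canonical and globally consistent one has to work inside the universal incidence variety of $d$-spaces satisfying the $d-s$ surviving defining conditions rather than moving individual subspaces one at a time.
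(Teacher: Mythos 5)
Your proposal is correct and follows essentially the same route as the paper: deform $W_s$ to the Fermat variety, identify the relevant submodule as $\LLL_{\KKK}(X)$ with $\KKK$ the set of partitions whose tail is $[2s+2,2s+3],\dots,[n,n+1]$ (the paper's $\JJJ_s$), and then factor $\bmodule_{\KKK}$ as $\bmodule^{(s)}_{\JJJ(2s)}\tensor_{\Z}A$ with $A$ the paper's $\widebar{S}(s,d)$, after which the conjectural torsion-freeness in dimension $2s$ finishes the argument. The flatness step you spell out (for $A$ a free $\Z$-module, $\Tors(M\otimes A)\cong(\Tors M)\otimes A$) is the explicit justification of what the paper states more briefly, and the concern you raise about parallel transport is exactly the same continuity argument already used — and accepted — in the proof of Corollary~\ref{cor:W}.
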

\par
We conclude this introductory section with a very brief outline of the other
developments related to the subject.
\par
In~\cite{MR552586} and~\cite{MR526513},
the $\Q$-Hodge structure
on the rational cohomology
$H^n(X, \Q)$
was intensively investigated.
Letting $\zeta:=e^{2\pi\ii/m}$,
the tensor product $H^n(X)\tensor \Q(\zeta)$ decomposes into
simple representations of a certain abelian group $G$
(see Section~\ref{sec:outline} below), which are all of dimension~$1$ and
pairwise distinct. This decomposition is compatible with the Hodge
filtration, and the Hodge indices of the summands are computed explicitly.
As a by-product of this computation, one concludes that, at least if the
degree~$m$ is a prime,
the space of rational Hodge classes $H^{d,d}(X)\cap H^n(X, \Q)$
is generated by the
classes of the standard $d$-spaces. (See also Ran~\cite{MR594486}.)
(In the special case $d=1$ of surfaces,
this rational generation property holds for all degrees prime
to~$6$.)
It is this fact that motivates our work and makes the study of the torsion of
the quotient $H_n(X)/ \LLL_{\JJJ}(X)$ particularly important: if this torsion is
trivial, the classes of the standard $d$-spaces generate
the $\Z$-module  of \emph{integral} Hodge classes $H^{d,d}(X)\cap H^n(X, \Z)$.
\par
In~\cite{MR1794260},
we investigated the Fermat variety $X_{q+1}$
of even dimension  and degree $q+1$
in characteristic $p>0$,
where $q$ is a power of $p$.
By considering
the
middle-dimensional
subspaces contained in $X_{q+1}$,
we showed that the discriminant of the lattice
of numerical equivalence classes of middle-dimensional algebraic cycles
of  $X_{q+1}$ is a power of $p$.
Note that the rank of this lattice is equal to the middle Betti
number of $X_{q+1}$,
that is, $X_{q+1}$ is supersingular.
\par
In~\cite{MR2789841},
we
suggested
a general method to calculate the primitive closure
in $H^2(Y)$ of the lattice generated by the classes of given curves
on a complex algebraic surface $Y$.
As an example, we applied
this method
to certain branched covers of the complex projective plane.
\par
In~\cite{arXiv13070382},
the method of~\cite{arXiv13053073} was generalized to the calculation of
the Picard groups
of the so-called \emph{Delsarte surfaces} $Y$.
More precisely, the computation of the Picard rank was suggested
in~\cite{MR833362}, and \cite{arXiv13070382} deals with
the (im-)primitivity of the subgroup $\LLL(Y)\subset H_2(Y)$ generated by the
classes of certain ``obvious'' divisors.
In a few cases, this subgroup is primitive, but as a rule the quotient
$H_2(Y)/\LLL(Y)$ does have a certain controlled torsion.
\par
\medskip
{\noindent\bf Acknowledgements.}
The authors heartily thank Professor Tetsuji Shioda
for many discussions.
This
work was partially completed during the first author's visit to
Hiroshima University; we extend our gratitude to this institution for its
great hospitality.
\par
\medskip
{\noindent\bf Notation.}
By $(a, \dots, \underset{(i)}{b}, \dots a)$,
we denote a vector whose $i$th coordinate is $b$ and other coordinates are $a$.
The hat $\hat{\phantom{a}}$ means
omission of an element; for example,
by $(a_1, \dots, \hat{a_i}, \dots, a_N)$,
we denote the vector $(a_1, \dots, a_{i-1}, a_{i+1} \dots, a_N)$.
\section{An outline of the proof}\label{sec:outline}
To avoid confusion, let us denote by $\Pw^{n+1}$
another copy of the projective space, the one
with homogeneous coordinates $(w_0:\dots:w_{n+1})$.
(Below, we will also use $\Cw^{n+1}$ for
an affine chart of~$\Pw^{n+1}$.)
In $\Pw^{n+1}$,
consider the hyperplane $\Pi$ defined by
$$
w_0+\dots+ w_{n+1}=0.
$$
Then we have the  Galois covering $\pi\colon X\to \Pi$
defined by
$$
(z_0:\dots:z_{n+1})\mapsto (z_0^m:\dots:z_{n+1}^m).
$$
We put
$\zeta:=e^{2\pi\ii/m}$.
Then the Galois group $G$ of $\pi$ is generated by
$$
\gamma_i\colon  (z_0:\dots: z_i: \dots :z_{n+1})\mapsto (z_0:\dots:\zeta z_i: \dots:z_{n+1})
$$
for $i=0, \dots, n+1$.
Since  $\gamma_0\cdots \gamma_{n+1}=1$,
this group $G$ is isomorphic to $(\Z/m\Z)^{n+1}$.
Throughout this paper,
we regard $R$ as the group ring $\Z[G]$
by corresponding $\gamma_i\in G$ to the variable $t_i$
for $i=1, \dots, n+1$, and $\gamma_0\in G$ to $t_0=t_1\inv \cdots t_{n+1}\inv$.
Then we can regard $H_n(X)$ as an $R$-module.
Note that,
for any subset $\KKK$ of $\JJJ$,
the
subgroup
$\LLLKKK(X)$ of $H_n(X)$ is in fact an  $R$-submodule,
because, for any $J\in \JJJ$,  $g\in G$, and $\beta\in \BBB$,
there exists $\beta\sprime\in \BBB$ such that $g(L_{J, \beta})=L_{J, \beta\sprime}$.
\par
Let $Y_0$ be the hyperplane section of $X$ defined by $ \{z_0=0\}$,
which is $G$-invariant.
Since the fundamental classes $[X]\in H_{2n}(X)$
and $[Y_0]\in H_{2n-2}(Y_0)$ are also fixed by~$G$, the
Poincar\'{e}--Lefschetz duality isomorphisms
$$
H_n(X\setminus Y_0)= H^n(X, Y_0),
\quad
H_{2n-i}(X)= H^{i}(X),\quad
H_{2n-2-i}(Y_0)= H^{i}(Y_0)
$$
are $R$-linear; hence, they convert the cohomology exact sequence of the pair
$(X,Y)$ into a long exact sequence \emph{of $R$-modules}
\begin{equation} \label{eq:longexact}
\cdots
 \maprightsp{}
 H_{n-1}(Y_0) \maprightsp{\bdr}
 H_n(X\setminus Y_0)   \maprightsp{\iota_*}
 H_n(X)  \maprightsp{}
 H_{n-2}(Y_0)\maprightsp{}
\cdots,
\end{equation}
where $\iota\colon X\setminus Y_0\inj X$ is the inclusion.
We then put
$$
V_n(X):=\Image (\iota_*\colon   H_n(X\setminus Y_0)\to  H_n(X) ).
$$
Since
the group
$H_{n-2}(Y_0)$ is torsion free,
the $R$-submodule $V_n(X)$ of $H_n(X)$ is primitive in $H_n(X)$ as a $\Z$-submodule.
\par
The structure of the $R$-module  $V_n(X)$  is
given
by the theory of
\emph{Pham polyhedron} developed in~\cite{MR0195868}.
Let
$z_0=1$ and regard $(z_1, \dots, z_{n+1})$ as affine coordinates on
the affine space $\Cz^{n+1}:=\Pz^{n+1}\setminus\{z_0=0\}$,
in which $X\setminus Y_0$ is defined by
$$
1+z_1^m+\cdots +z_{n+1}^m=0.
$$
Fix the $m$-th root $\eta:=e^{\pi\ii/m}$ of $-1$,
and consider the (topological) $n$-simplex
$$
D:=\set{(s_1\eta, \dots, s_{n+1}\eta)}
 {s_i\in\R,\;\;s_1^m+\dots+s_{n+1}^m=1, \;\; 0\le s_i\le 1}
$$
in $X\setminus Y_0$,
oriented so that
that,
if we consider $(s_1, \dots, s_{n})$ as local real coordinates of $D$ at an interior point  of $D$, then
\begin{equation*}\label{eq:bdrti}
(-\partial/\bdr s_1, \dots, -\partial/\bdr s_{n})
\end{equation*}
is a positively-oriented basis of the real tangent space of $D$ at this point.
Then it is easy to see that
the chain
$$
S:=(1-\gamma_1\inv)\dots (1-\gamma_{n+1}\inv) D
$$
is a cycle; moreover, it is homeomorphic to
the join of $(n+1)$ copies of the two-point space $\{\eta,\zeta\eta\}$, i.e.,
to the $n$-sphere.
(Here and below, we do not distinguish between ``simple'' singular chains
in~$X$ and the corresponding geometric objects, \latin{viz.} unions of
simplices with the orientation taken into account and the common parts of the
boundary identified. For this reason, we freely apply the module notation to
simplices.)
Hence,
we have the class $[S]\in H_n(X \setminus Y_0)$
and its image $[S]\in V_n(X)$ by $\iota_*$.
Pham~\cite{MR0195868} proved the following:
\begin{theorem}[see \cite{MR0195868}]\label{thm:pham}
The  homomorphism $1\mapsto [S]$ from $R$ to $H_n(X\setminus Y_0)$
induces an isomorphism $\barR  \cong H_n(X\setminus Y_0)$ of $R$-modules,
and hence a surjective  homomorphism $R\surj V_n(X)$ of $R$-modules.
\end{theorem}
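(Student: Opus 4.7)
The plan is to use the fact that $X\setminus Y_0\subset\Cz^{n+1}$, defined by $1+z_1^m+\cdots+z_{n+1}^m=0$, is a Milnor fiber of the Brieskorn--Pham isolated singularity $z_1^m+\cdots+z_{n+1}^m$ at the origin. Pham's classical picture realizes this Milnor fiber, $G$-equivariantly up to homotopy, as the iterated join of $n+1$ copies of the $m$-point set $\{\eta,\zeta\eta,\dots,\zeta^{m-1}\eta\}$, one in each coordinate axis. In this picture $D$ is a single top-dimensional cell of the join polyhedron, its $G$-translates tile the polyhedron, and $S$ will be the canonical representative of a generator of top homology.

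First I would verify that $S$ is a cycle. The boundary of $D$ is contained in $\bigcup_i\{z_i=0\}$, since its face $s_i=0$ lies in $\{z_i=0\}$. Because $\gamma_i$ acts trivially on $\{z_i=0\}$, that face is fixed by $\gamma_i\inv$ and is therefore killed by the factor $(1-\gamma_i\inv)$ in $S$; the remaining factors $(1-\gamma_j\inv)$, $j\ne i$, preserve the hyperplane $\{z_i=0\}$ and cannot reintroduce the missing face, so every boundary stratum of $D$ is successively cancelled and $\partial S=0$. Next, the $R$-linear chain map $r\mapsto r\cdot S$ descends to $\barR$: it suffices to check $\phim(t_i)\cdot S=0$ already on chains for $i=1,\dots,n+1$ (the case $i=0$ then follows from $t_0=t_1\inv\cdots t_{n+1}\inv$), and this is the formal identity
\[
\phim(\gamma_i)(1-\gamma_i\inv)
 =(1+\gamma_i+\cdots+\gamma_i^{m-1})-(\gamma_i\inv+1+\cdots+\gamma_i^{m-2})
 =\gamma_i^{m-1}-\gamma_i\inv=0
\]
in $\Z[G]$ (using $\gamma_i^m=1$). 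Together these two steps produce a well-defined $R$-linear homomorphism $\barR\to H_n(X\setminus Y_0)$ sending $1\mapsto[S]$.

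It then remains to show that this induced map is bijective. Both sides are free abelian groups of rank $(m-1)^{n+1}$: the domain has the monomial basis $\{t_1^{a_1}\cdots t_{n+1}^{a_{n+1}}:0\le a_i\le m-2\}$, while the target is the reduced homology of a join of $n+1$ copies of an $m$-point set, hence free of the same rank by Pham's computation. I would finish by exhibiting the translates $t^{\mathbf a}\cdot[S]$ as a $\Z$-basis of $H_n(X\setminus Y_0)$: two such translates of $S$ intersect transversely in at most a single point inside the join polyhedron, and only pairs of multi-indices whose coordinates differ by at most one contribute non-zero intersection numbers, which forces the resulting Gram matrix to be unimodular.

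The first two steps are formal and combinatorial; the substantive content is the last one, where one must either appeal to a $G$-equivariant deformation retraction of $X\setminus Y_0$ onto the polyhedron $\bigcup_{g\in G}g\cdot D$ or carry out the explicit intersection-form computation. Either way, this is the heart of the original Brieskorn--Pham analysis of~\cite{MR0195868}, and it is the main obstacle in an honest proof of the theorem; in the present paper it is invoked as Pham's theorem rather than reproved.
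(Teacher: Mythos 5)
The paper does not prove this theorem; it is attributed to Pham~\cite{MR0195868} and invoked as a black box, exactly as you observe at the end of your proposal. Your sketch follows the standard Brieskorn--Pham route, and the parts you actually verify are correct: each boundary face of $D$ lies in some $\{z_i=0\}$ and is fixed by $\gamma_i$, hence annihilated by the factor $(1-\gamma_i\inv)$, so $\partial S=0$; the telescoping identity $\phim(\gamma_i)(1-\gamma_i\inv)=\gamma_i^{m-1}-\gamma_i\inv=0$ in $\Z[G]$ shows $\phim(\gamma_i)\,S=0$ on the chain level for $i=1,\dots,n+1$; and both sides are free of rank $(m-1)^{n+1}$, the target because $X\setminus Y_0$ equivariantly retracts onto the $(n+1)$-fold join of $m$-point sets. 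Your acknowledgment that the genuinely hard step (bijectivity, either via the retraction onto the join polyhedron or via unimodularity of the intersection form on $G$-translates of $S$) is precisely Pham's original contribution is also accurate.

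One parenthetical claim is incorrect, however. You assert that $\phim(t_0)\cdot S=0$ ``then follows from $t_0=t_1\inv\cdots t_{n+1}\inv$''. It does not: $\phim(t_0)$ does not lie in the ideal $(\phim(t_1),\dots,\phim(t_{n+1}))$ of $R$. As a concrete check, with $m=3$ and two variables one finds, in $\Z[t_1,t_2]/(\phim(t_1),\phim(t_2))$, that $\phim(t_0)=\phim(t_1\inv t_2\inv)=2+t_1+t_2+2\,t_1t_2\neq0$. Equivalently, at any multi-eigenvalue $(a_1,\dots,a_{n+1})$ with all $a_i\neq1$ but $a_1\cdots a_{n+1}=1$, the operator $\phim(\gamma_0)$ acts by $m\neq0$ on the corresponding eigenline of $H_n(X\setminus Y_0)\tensor\C$, and such multi-eigenvalues do occur for $n\geq1$. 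Fortunately this remark is superfluous: the chain map already descends to $\Z[t_1,\dots,t_{n+1}]/(\phim(t_1),\dots,\phim(t_{n+1}))$, which is exactly the free $\Z$-module of rank $(m-1)^{n+1}$ that Pham identifies with $H_n(X\setminus Y_0)$. Simply delete the parenthetical; the rest of the sketch stands as a correct outline of the cited proof.
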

The Poincar\'{e} duality
gives rise to symmetric bilinear pairings $\intpempty$
on the groups $H_n(X\setminus Y_0)$, $V_n(X)$, and $H_n(X)$, which is
interpreted geometrically as the signed intersection of $n$-cycles brought to a
general position.
We emphasize that these pairings are $\Z$-bilinear and
$G$-invariant (as so is the fundamental class $[X]$).
The homomorphisms  $H_n(X\setminus Y_0)\surj V_n(X)\inj H_n(X)$ preserve $\intpempty$.
Note  that $\intpempty$
is non-degenerate on $H_n(X)$, but not on $H_n(X\setminus Y_0)$.
Later, we will see that $\intpempty$ is also nondegenerate on $V_n(X)$.
\par
The main ingredient of the proof of Theorems~\ref{thm:main} and~\ref{thm:mainrank}  is the following:
\begin{theorem}[see Section~\ref{sec:LS}]\label{thm:LS}
For  $\beta_i\in \C\sptimes$ with $\beta_i^m=-1$,
we put
$$
s(\beta_i):=\begin{cases}
1 & \textrm{if $\beta_i=\eta$}, \\
-1 & \textrm{if $\beta_i=\eta^{-1}$}, \\
0 & \textrm{otherwise}.
\end{cases}
$$
(Recall that we fixed $\eta:=e^{\pi\ii/m}$.)
For $J=[[j_0, k_0], \dots, [j_d, k_d]]\in \JJJ$
ordered as in~\eqref{eq:condJ},
let $\sigma_J$ be the permutation
$$
\left(
\begin{array}{cccccc}
0 & 1 & \dots & \dots & n &n+1 \\
j_0 & k_0 & \dots & \dots & j_d & k_d
\end{array}
\right).
$$
Then we have
$$
\intp{L_{J, \beta}, S}=\sgn(\sigma_J) s(\beta_0) \cdots s(\beta_d),
$$
where $\beta=(\beta_0, \dots, \beta_d)\in \BBB$.
\end{theorem}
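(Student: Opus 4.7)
The plan is to localize the computation of $\intp{L_{J,\beta},S}$ to a single intersection point and perform the local intersection number computation there.

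Working in the affine chart $z_0=1$, every point of the support $|S|$ has the form $(s_\ell\eta^{a_\ell})_{\ell=1,\dots,n+1}$ with $s_\ell\in[0,1]$, $a_\ell\in\{\pm1\}$, and $\sum_\ell s_\ell^m=1$. Imposing the defining equation $z_{k_0}=\beta_0$ of $L_{J,\beta}$ together with $|\beta_0|=1$ forces $s_{k_0}=1$ (and hence $s_\ell=0$ for all $\ell\ne k_0$) and $\beta_0\in\{\eta,\eta^{-1}\}$. Thus $L_{J,\beta}\cap|S|$ is either empty---in which case $s(\beta_0)=0$ and the formula trivially holds---or it consists of the single vertex $v$ of $|S|$ given in the chart by $z_{k_0}=\beta_0$ and $z_\ell=0$ for $\ell\ne 0,k_0$; the sign $s(\beta_0)=\pm 1$ distinguishes the two possibilities.

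In the nontrivial case, introduce local holomorphic coordinates $(z_\ell)_{\ell\ne0,k_0}$ on $X$ at $v$, with $z_{k_0}$ implicitly determined by the Fermat equation. In these coordinates $L_{J,\beta}$ is the complex $d$-plane $\{z_{k_i}=\beta_iz_{j_i}:i=1,\dots,d\}$. Among the $2^{n+1}$ simplices $D_\epsilon$, only those with $\epsilon_{k_0}=0$ contain $v$; summing these with signs $(-1)^{|\epsilon|}$ and expanding the resulting alternating product factorizes the local part of $S$ at $v$ as
\[
\Sigma_v \;=\; \prod_{\ell\ne 0,k_0}\bigl(\{s\eta\}_\ell-\{s\eta^{-1}\}_\ell\bigr).
\]
Regrouping the factors by the pairs $(j_i,k_i)$ for $i=1,\dots,d$ and tracking the sign introduced when reordering the chart coordinates into the pair ordering $(j_0,k_0,\dots,j_d,k_d)$ (that is, by the permutation $\sigma_J$), the local intersection number at $v$ factorizes as
\[
\intp{L_{J,\beta},S}_v \;=\; \sgn(\sigma_J)\,s(\beta_0)\prod_{i=1}^{d}\intp{L_i,\Sigma_i}_{\C^2},
\]
where $L_i=\{(z,\beta_iz):z\in\C\}$ and $\Sigma_i=\bigl(\{s\eta\}-\{s\eta^{-1}\}\bigr)^{\times 2}\subset\C^2$.

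It remains to verify $\intp{L_i,\Sigma_i}_{\C^2}=s(\beta_i)$ for each $i$. Inside a small ball $B_\epsilon\subset\C^2$, $L_i$ is a holomorphic $1$-disk and $\Sigma_i$ is a $2$-chain; the intersection number is computed by a compactly supported isotopy of $L_i$ that fixes $\partial L_i\subset\partial B_\epsilon$ and places $L_i$ in transverse position with $\Sigma_i$, then counting weighted transverse points in each of the four quadrants $Q_{(a,b)}$ of $\Sigma_i$ (weighted by $ab\in\{\pm 1\}$) with the sign dictated by the complex orientation of $\C^2$. When $\beta_i\in\{\eta,\eta^{-1}\}$, the set $L_i\cap|\Sigma_i|$ equals $\{0\}$ and exactly one quadrant contributes a single signed point, yielding $\pm 1=s(\beta_i)$. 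When $\beta_i\notin\{\eta,\eta^{-1}\}$, the various quadrant contributions cancel, giving $0=s(\beta_i)$; the subtlest subcase is $\beta_i\in\{\zeta,\zeta^{-1}\}=\{\eta^{\pm 2}\}$, where $L_i\cap|\Sigma_i|$ contains a one-dimensional ray whose boundary-preserving resolution is precisely what forces the cancellation.

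The main obstacle is this final $\C^2$-computation, especially the tangent cases $\beta_i\in\{\zeta,\zeta^{-1}\}$: a naive translation of $L_i$ fails to preserve the boundary condition on $\partial B_\epsilon$ and produces an incorrect nonzero count, so one must employ a genuine boundary-preserving isotopy and carefully track the interplay between the quadrant weights $ab$, the complex orientation of $\C^2$, and the resolution of the ray. Collecting these local signs together with the reordering sign of the pair decomposition yields the asserted formula $\sgn(\sigma_J)\prod_{i=0}^{d}s(\beta_i)$.
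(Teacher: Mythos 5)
Your overall strategy coincides with the paper's: localize to the unique intersection vertex (determined by $\beta_0\in\{\eta,\eta\inv\}$, else the spaces are disjoint), write the local model of~$S$ at that vertex as an $n$-fold product of the $1$-chain $W(\pi/m)=H(\pi/m)-H(-\pi/m)$, regroup the factors in pairs, and reduce to a single local intersection number in~$\C^2$ (Lemma~\ref{lem:LambdaW} of the paper). However, two steps in your outline need genuine repair.

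First, the sign. You attribute the whole factor $\sgn(\sigma_J)$ to ``reordering the chart coordinates.'' That reordering only contributes the sign of the permutation
$\sigma_J\sprime\colon(1,\dots,\hat{k_0},\dots,n+1)\mapsto(j_1,k_1,\dots,j_d,k_d)$,
which differs from $\sgn(\sigma_J)$ by $(-1)^{k_0+1}$. The missing factor $(-1)^{k_0+1}$ comes from the chosen orientation of the simplex~$D$ and its local description at the vertex $p_{k_0}$ (the paper's Remark~\ref{rem:orientation}), and cannot be absorbed into a coordinate permutation. Your formula for $\Sigma_v$ omits this orientation constant, so the claimed identity $\Sigma_v=\prod_{\ell\ne0,k_0}W(\pi/m)$ is off by $(-1)^{k_0+1}$; without fixing and tracking the orientation of~$D$ explicitly, the final sign $\sgn(\sigma_J)$ is not actually derived.

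Second, the ``subtlest subcase'' $\beta_i\in\{\zeta,\zeta\inv\}=\{\eta^{\pm2}\}$ that you single out never occurs. Since $\beta_i^m=-1$, each $\beta_i$ is an \emph{odd} power of $\eta$, whereas $\zeta^{\pm1}=\eta^{\pm2}$ is even; consequently $\beta_i H(\pm\pi/m)$ never coincides with $H(\pm\pi/m)$, the set $\Lambda_{\beta_i}\cap(W(\pi/m)\times W(\pi/m))$ is always just the origin, and there is no $1$-dimensional ray to resolve. (This is why the paper's proof of Lemma~\ref{lem:LambdaW} needs only the single uniform compactly supported perturbation $z\mapsto\beta_i z+\ve e^{\ii\tau}\rho(|z|)$, after which the perturbed chains either become disjoint, yielding $0$, or meet in exactly one transverse point, yielding $\pm1$.) So the obstacle you flag is not the real content; the content is the explicit transversalization and the orientation determinant computation, which your outline gestures at but does not carry out.
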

We use
Theorem~\ref{thm:LS} and the fact that the pairing on $H_n(X)$ is
nondegenerate to compute the subgroup $\LLLKKK (X)\subset H_n(X)$.
Various stages of this computation result in most principal statements of the
paper.
\section{Intersection of $S$ and the standard $d$-spaces}\label{sec:LS}
In this section, we prove Theorem~\ref{thm:LS}.
The affine part $X\setminus Y_0$ of $X$
is defined by $1+z_1^m+ \cdots+z_{n+1}^m=0$
in the affine space $\Cz^{n+1}$ with coordinates $(z_1, \dots, z_{n+1})$.
We put
$$
\Cw^{n+1}:=\Pw^{n+1}\setminus \{w_0=0\},
$$
and setting $w_0=1$,
we regard $(w_1, \dots, w_{n+1})$ as affine coordinates of $\Cw^{n+1}$.
We put
$$
z_i=x_i+\ii y_i,
\quad
w_i=u_i+\ii v_i,
$$
where $x_i, y_i, u_i, v_i$ are real coordinates.
Consider  the affine hyperplane
$$
\affPi:=\Pi\cap \Cw^{n+1}=\{ 1+ w_1+\cdots+ w_{n+1}=0\}
$$
of $\Cw^{n+1}$.
In the real part
$$
\affPi\cap \{v_1=\dots=v_{n+1}=0\}= \set{(u_1, \dots, u_{n+1} )\in \Ru^{n+1}}{1+u_1+\dots+u_{n+1}=0}
$$
of $\affPi$,
we have an $n$-simplex $\Delta$ defined by
$$
1+ u_1+\cdots+ u_{n+1}=0\quand
-1\le u_i\le 0\;\;\textrm{for}\;\; i=1, \dots, n+1.
$$
Then $\pi\colon X\to \Pi$ induces a homeomorphism $\pi|_D\colon D\isom \Delta$.
We put
$$
p_i:=(0, \dots, \underset{(i)}{\eta}, \dots 0)\in D,
$$
and put $\bar{p}_i:=\pi(p_i)=(0, \dots,\underset{(i)}{ -1}, \dots, 0)$.
Then $\bar{p}_1, \dots, \bar{p}_{n+1}$ are the vertices of $\Delta$.
\begin{remark}\label{rem:Sintpiinv}
Note that $S\subset \pi\inv(\Delta)$,
and that
\begin{equation*}\label{eq:Sintpiinv}
S\cap \pi\inv(\{\bar{p}_1, \dots, \bar{p}_{n+1}\})
=\{p_1, \gamma_1\inv(p_1), \dots, p_{n+1}, \gamma_{n+1}\inv(p_{n+1})\}.
\end{equation*}
\end{remark}
\begin{remark}\label{rem:orientation}
By the definition of the orientation of $D$ given in
Section~\ref{sec:outline},
we see that, locally at $p_i$,
the $n$-chain $D$ is identified with the product
$$
(-1)^{i+1}\; \overrightarrow{p_i p_1} \times \cdots \times
\overrightarrow{p_i p_{i-1}} \times \overrightarrow{p_i p_{i+1}}
\times\cdots\times \overrightarrow{p_i p_{n+1}}
$$
of $1$-chains,
where $\overrightarrow{p_i p_k}$ is the $1$-dimensional edge of $D$
connecting $p_i$ and $p_k$ and oriented from $p_i$ to $p_k$.
\end{remark}
\par
By the condition~\eqref{eq:condJ} on $\JJJ$,
we always have $j_0=0$.
Let $b_0$ be an element of $\Z/m\Z$ such that
$$
\beta_0=\eta^{1+2b_0}=\zeta^{b_0}\eta.
$$
In the affine coordinates $(z_1, \dots, z_{n+1})$ of $\Cz^{n+1}$,
the equations~\eqref{eq:Lhomog} of $L_{J, \beta}$ are written as
\begin{equation}\label{eq:Linhomog}
z_{k_0}=\beta_0, \quad z_{k_i}=\beta_i z_{j_i} \quad(i=1, \dots, d).
\end{equation}
If~\eqref{eq:Linhomog} holds,
then we have $z_{k_i}^m=- z_{j_i}^m$ for $i=1, \dots, d$,
and hence $L_{J, \beta} \cap \pi\inv (\Delta)$ consists of a single point
$$
(0, \dots, \underset{(k_0)}{\beta_0}, \dots 0)=\gamma_{k_0}^{b_0} (p_{k_0})
$$
by Remark~\ref{rem:Sintpiinv}.
Therefore, we have
$$
L_{J, \beta} \cap S=
\begin{cases}
\emptyset & \textrm{if $\beta_0\ne \eta$ and $\beta_0\ne \eta\inv$},\\
\{p_{k_0}\}& \textrm{if $\beta_0= \eta$},\\
\{\gamma_{k_0}\inv  (p_{k_0})\} & \textrm{if $\beta_0= \eta\inv$}.
\end{cases}
$$
In particular,
we have
\begin{equation}\label{eq:LSa}
\intp{L_{J, \beta}, S}=0\quad\textrm{if $\beta_0\ne \eta$ and $\beta_0\ne \eta\inv$}.
\end{equation}
\par
In order to calculate $\intp{L_{J, \beta}, S}$ in the cases
where $\beta_0=\eta\sp{\pm 1}$, we need the following lemma.
For an angle $\theta$,
we consider the oriented real semi-line
$$
H(\theta):=\R_{\ge 0}\,e^{\ii\theta}\quad
\textrm{with the orientation from $0$ to $e^{\ii\theta}$}
$$
on the complex plane $\C$,
and define the chain (with closed support)
$$
W(\theta):=H(\theta)-H(\theta-2\pi/m) =(1-\gamma\inv) H(\theta),
$$
where $\gamma\colon \Cz\to \Cz$ is the multiplication by  $\zeta=e^{2\pi\ii/m}$.
Note that $W(\pi/m)=H(\pi/m)-H(-\pi/m)$.
Let $\C^2$ be equipped with coordinates $(z, z\sprime)$.
For $\beta_i\in \C$ with $\beta_i^m=-1$,
we denote by $\Lambda_{\beta_i}$ the linear subspace of $\C^2$
defined by $z\sprime=\beta_i z$.
\begin{lemma}\label{lem:LambdaW}
The local intersection number $\ell(\beta_i)$ at the origin
in $\C^2$
of
the chains
$W(\pi/m)\times W(\pi/m)$ and
$\Lambda_{\beta_i}$
is equal to $s(\beta_i)$.
\end{lemma}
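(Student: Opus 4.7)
The plan is to compute the local intersection number $\ell(\beta_i)$ by a direct perturbation of $\Lambda_{\beta_i}$, after first localizing the intersection at the origin.

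First I would verify that $\Lambda_{\beta_i}$ meets $W(\pi/m) \times W(\pi/m)$ only at the origin. A point $(z, \beta_i z)$ of $\Lambda_{\beta_i}$ lies in the sector $H(\epsilon_1 \pi/m) \times H(\epsilon_2 \pi/m)$ (with $\epsilon_j \in \{\pm 1\}$) precisely when $\arg z \equiv \epsilon_1 \pi/m$ and $\arg \beta_i \equiv (\epsilon_2 - \epsilon_1) \pi/m \pmod{2\pi}$. Since $\arg\beta_i$ is an odd multiple of $\pi/m$ while $\epsilon_2 - \epsilon_1 \in \{0,\pm 2\}$ yields only an even multiple, the congruence has no solution modulo $2m$; hence no intersection occurs for $z \ne 0$. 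Consequently, $\ell(\beta_i)$ equals the signed count of intersections with $W(\pi/m) \times W(\pi/m)$ of any sufficiently small perturbation of $\Lambda_{\beta_i}$ moved off the origin.

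Next I would take $\Lambda'_\Delta := \{(z, \beta_i z + \Delta) : z \in \C\}$ for a small generic $\Delta \in \C \setminus \{0\}$. With the expansion
\[
W(\pi/m) \times W(\pi/m) = \sum_{\epsilon_1, \epsilon_2 \in \{\pm 1\}} \epsilon_1 \epsilon_2 \, H(\epsilon_1 \pi/m) \times H(\epsilon_2 \pi/m),
\]
the intersection of $\Lambda'_\Delta$ with the $(\epsilon_1,\epsilon_2)$-summand reduces to solving $r_2 e^{\ii \epsilon_2 \pi/m} - \beta_i r_1 e^{\ii \epsilon_1 \pi/m} = \Delta$ with $r_1, r_2 \ge 0$; this admits a unique solution exactly when $\Delta$ lies in the open convex cone $C_{\epsilon_1, \epsilon_2}$ generated by $e^{\ii \epsilon_2 \pi/m}$ and $-\beta_i e^{\ii \epsilon_1 \pi/m}$. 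The sign of such a transverse intersection is the signed determinant of the $4 \times 4$ real matrix with columns $(1, \beta_i), (\ii, \ii\beta_i), (e^{\ii \epsilon_1 \pi/m}, 0), (0, e^{\ii \epsilon_2 \pi/m})$, multiplied by the chain coefficient $\epsilon_1 \epsilon_2$. A cofactor expansion along the third column yields the clean expression $\epsilon_1 \epsilon_2 \sgn\bigl(\sin((\epsilon_1 - \epsilon_2)\pi/m + \arg \beta_i)\bigr)$.

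Writing $\arg\beta_i = (2k+1)\pi/m$ for some $k \in \{0, \dots, m-1\}$, I would finish by tabulating, for a generic direction of $\Delta$, which of the four cones $C_{\epsilon_1, \epsilon_2}$ contain $\Delta$ and summing the corresponding signs. Independence of the result from the choice of $\Delta$ (ensured by the isolation at the origin together with the closedness of both chains) makes the count unambiguous. The totals come out to $+1$ for $k = 0$ (namely $\beta_i = \eta$), $-1$ for $k = m-1$ (namely $\beta_i = \eta^{-1}$), and $0$ for $1 \le k \le m-2$, matching $s(\beta_i)$. The main obstacle lies in this final tabulation: systematically verifying cancellation of sector contributions in the intermediate range, and handling the degenerate cases where $\sin((\epsilon_1 - \epsilon_2)\pi/m + \arg\beta_i) = 0$ (notably $\beta_i = -1$ for odd $m$), where the perturbation above fails to be transverse and one must either refine it or inspect the behaviour on cone boundaries directly.
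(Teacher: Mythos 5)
Your plan follows the same overall strategy as the paper---perturb $\Lambda_{\beta_i}$ off the origin and count signed transverse intersections with $W(\pi/m)\times W(\pi/m)$---and the preliminary steps are sound: the parity argument correctly shows the intersection is concentrated at the origin, the decomposition of $W(\pi/m)\times W(\pi/m)$ into four oriented quadrants is right, and the $4\times4$ determinant, combined with the quadrant coefficient $\epsilon_1\epsilon_2$, does give $\epsilon_1\epsilon_2\,\sgn\sin(\arg\beta_i+(\epsilon_1-\epsilon_2)\pi/m)$. However, the argument stops precisely where the real work begins, and the gap you flag is genuine.

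The missing piece is the tabulation: for each of the $m$ possible values of $\arg\beta_i=(2k+1)\pi/m$ one must determine, for a generic small $\Delta$, which of the cones $C_{\epsilon_1,\epsilon_2}$ contain $\Delta$, check that the signed sum is independent of the choice of $\Delta$, and verify that it equals $s(\beta_i)$. This is not routine: the set of contributing cones changes as $\Delta$ rotates, and the cancellations have to be exhibited. Moreover, when $e^{\ii\epsilon_2\pi/m}$ and $-\beta_i e^{\ii\epsilon_1\pi/m}$ become $\R$-collinear---which happens exactly when $2k+1+\epsilon_1-\epsilon_2\equiv 0\pmod m$, notably $\beta_i=-1$ for odd $m$, degenerating $C_{++}$ and $C_{--}$ to rays---the corresponding quadrant fails to meet $\Lambda'_\Delta$ transversely, so a pure translation is not an admissible perturbation; you would need to add a small rotation and argue separately. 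The paper avoids the entire tabulation by exploiting the graph structure: since $\Lambda_{\beta_i}=\{(z,f(z))\}$ with $f(z)=\beta_iz$, its intersection with $W(\pi/m)\times W(\pi/m)$ is captured by the one-complex-dimensional picture $\tilde f(W(\pi/m))\cap W(\pi/m)$ in $\C$, where $\tilde f(z)=\beta_iz+\varepsilon e^{\ii\tau}\rho(|z|)$ is the perturbed graphing map. Choosing the push direction $\tau$ appropriately makes the two ``V-shapes'' $\tilde f(W(\pi/m))$ and $W(\pi/m)$ disjoint whenever $\beta_i\ne\eta^{\pm1}$, giving $\ell(\beta_i)=0$ in all intermediate cases with no casework, and forces exactly one crossing along the shared ray when $\beta_i=\eta^{\pm1}$, whose sign is then computed by a single $4\times4$ determinant as you propose. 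If you wish to salvage the cone-counting route you must carry out the full tabulation and patch the degenerate cases; the $\C$-reduction is substantially less work.
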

\begin{proof}
The linear subspace  $\Lambda_{\beta_i}$ is the graph of the function $f\colon z\mapsto z\sprime=\beta_i z$,
and hence $f(W(\pi/m))$ is obtained by rotating $W(\pi/m)$ by $\beta_i\in \C\sptimes$.
Let $\ve$ and $\ve\sprime$ be sufficiently small positive real numbers.
We perturb $\Lambda_{\beta_i}$
locally at the origin to the graph $\tilde\Lambda_{\beta_i}$ of the function
$$
\tilde{f}\colon z\mapsto z\sprime=\beta_i z+\ve e^{\ii \tau}\rho(|z|),
$$
where $\rho\colon \R_{\ge 0}\to \R_{\ge 0}$ is the function
$$
\rho(x)=\begin{cases}
1 & \textrm{if $x\le \ve\sprime$,}\\
2-x/\ve\sprime & \textrm{if $\ve\sprime\le x\le 2\ve\sprime$,}\\
0 & \textrm{if $2\ve\sprime\le x$.}
\end{cases}
$$
The direction $\tau$ of the perturbation is given
as in Figure~\ref{fig:WLambda},
where $W(\pi/m)$ are drawn by thick arrows,
$f(W(\pi/m))$ are drawn by  thin arrows
 and $\tilde f(W(\pi/m))$ are drawn by  broken arrows.
\begin{figure}
\includegraphics[width=10cm]{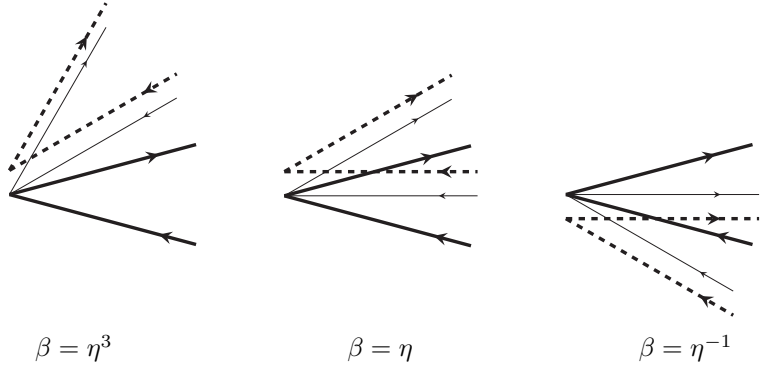}
\vskip .1cm
$\beta=\eta^3$
\hskip 3cm
$\beta=\eta$\hskip 3cm
$\beta=\eta\inv$
\caption{$W(\pi/m)$, $f(W(\pi/m))$ and  $\tilde f(W(\pi/m))$}\label{fig:WLambda}
\end{figure}
\par
Suppose that $\beta_i\ne \eta$ and $\beta_i\ne \eta\inv$.
As  Figure~\ref{fig:WLambda}  illustrates in the case $\beta_i=\eta^3$, we see that
$\tilde f(W(\pi/m))$ and $ W(\pi/m)$ are disjoint,
and hence
$$
\tilde\Lambda_{\beta_i}\cap (W(\pi/m)\times W(\pi/m))=\emptyset.
$$
Therefore $\ell(\beta_i)=0$.
\par
Suppose that $\beta_i= \eta$.
Then the intersection of $\tilde\Lambda_{\eta}$ and $W(\pi/m)\times W(\pi/m)$
consists of a single point
$(Q, \tilde f (Q))$,
where $Q\in H(-\pi/m)$ and $\tilde f (Q)\in H(\pi/m)$.
We choose a positively-oriented basis of the real tangent space of $\Cz^2$ at this point as
$$
(\partial/ \bdr x, \partial/ \bdr y, \partial/ \bdr x\sprime, \partial/ \bdr y\sprime),
\quad\textrm{where \;\;$z=x+\ii y, \;\;\;z\sprime=x\sprime+\ii y\sprime$}.
$$
The positively-oriented basis of the tangent space of $\tilde\Lambda_{\eta}$
at $(Q, \tilde f (Q))$ is
$$
(1, 0, \cos(\pi/m), \sin(\pi/m)),
\quad
(0, 1,  -\sin(\pi/m), \cos(\pi/m)),
$$
while the positively-oriented basis of the tangent space of $W(\pi/m)\times W(\pi/m)$
at $(Q, \tilde f (Q))\in H(-\pi/m)\times H(\pi/m)$ is
$$
(-\cos(-\pi/m), -\sin(-\pi/m), 0,0),
\quad
(0, 0,   \cos(\pi/m), \sin(\pi/m)).
$$
(Note that $W(\pi/m)$ is oriented \emph{toward} the origin on $H(-\pi/m)$.)
Calculating the sign of the determinant of the $4\times 4$ matrix with row vectors being the
four vectors above in this order,
we see that $\ell(\eta)=1$.
\par
Suppose that $\beta_i= \eta\inv$.
Then  $\tilde\Lambda_{\eta\inv} \cap W(\pi/m)\times W(\pi/m)$
consists of a single point
$(Q, \tilde f (Q))$,
where $Q\in H(\pi/m)$ and $\tilde f (Q)\in H(-\pi/m)$.
The positively-oriented basis of the tangent space of $\tilde\Lambda_{\eta\inv}$
at $(Q, \tilde f (Q))$ is
$$
(1, 0, \cos(-\pi/m), \sin(-\pi/m)),
\quad
(0, 1,  -\sin(-\pi/m), \cos(-\pi/m)),
$$
while that of $W(\pi/m)\times W(\pi/m)$
at $(Q, \tilde f (Q))\in H(\pi/m)\times H(-\pi/m)$ is
$$
(\cos(\pi/m), \sin(\pi/m), 0,0),
\quad
(0, 0,   -\cos(-\pi/m), -\sin(-\pi/m)).
$$
Calculating the determinant,
we see that $\ell(\eta\inv)=-1$.
\end{proof}
Let $p$ be $p_i$ or $\gamma_i\inv (p_i)$.
In  a small neighborhood $\UUU_p$ of $p$ in $X\setminus Y_0$,
we have   local coordinates
$(z_1, \dots, \hat{z}_i, \dots, z_{n+1})$
of $X\setminus Y_0$.
Let
$$
\iota_p\colon\;\;  \UUU_p\inj \Cz\times \cdots\times \Cz \quad \text{($n$ factors)}
$$
be the open immersion defined by $(z_1, \dots, \hat{z}_i, \dots, z_{n+1})$.
We consider an element
$$
g:=\gamma_1^{\nu_1}\cdots \gamma_{n+1}^{\nu_{n+1}}\;\in\; G,
$$
and give a local description of $g(D)$  at $p=p_i$ and  $p=\gamma_i\inv (p_i)$
via $\iota_p$.
\par
\medskip
(1)  Locally around $p=p_i$.
If $\nu_i\ne 0$,
then $p_i\notin g(D)$ and hence $\UUU_p\cap g(D)=\emptyset$.
Suppose that
$\nu_i=0$.
Using Remark~\ref{rem:orientation} and the fact that $g$ preserves the orientation,
we see that $g(D)$ is identified with
\begin{equation}\label{eq:pigD}
\renewcommand{\arraystretch}{1.4}
\begin{array}{c}
(-1)^{i+1}\;H((2\nu_1+1)\pi  /m)\times \cdots \times H((2\nu_{i-1}+1)\pi /m)\times\qquad\quad \\
\phantom{aaaaaaaaaaaaaa} H((2\nu_{i+1}+1)\pi ) /m)\times \cdots \times H((2\nu_{n+1}+1)\pi ) /m).
\end{array}
\end{equation}
\par
(2)  Locally around $p=\gamma\inv(p_i)$.
If $\nu_i\ne -1$,
then $\gamma\inv(p_i)\notin g(D)$ and hence $\UUU_p\cap g(D)$ is empty.
Suppose that
$\nu_i=-1$.
Then $g(D)$ is identified with~\eqref{eq:pigD}
because the action of $\gamma_i$ maps the local descriptions of $g(D)$
at $\gamma_i\inv (p_i)$ to that of
$\gamma_i g(D)$  at $p_i$.
\par
\medskip
We put
$$
S_i:=(1-\gamma_1\inv)\cdots (1-\gamma_{i-1}\inv) (1-\gamma_{i+1}\inv)\cdots (1-\gamma_{n+1}\inv) D
$$
(note that $\gamma_i$ is missing),
which is a hemisphere of the $n$-sphere $S$ containing $p_i$.
The other hemisphere is $\gamma_i\inv (S_i)$,
and
we have
 $S=S_i-\gamma_i\inv (S_i)$.
Since $p_i \in S_i$ and $p_i\notin \gamma_i\inv(S_i)$,
$S$ is identified with
$$
(-1)^{i+1}\;W(\pi/m) \times \cdots\times W(\pi/m)
$$
locally at $p_i$ by $\iota_{p_i}$; while
since
$\gamma_i\inv (p_i) \notin S_i$ and $\gamma_i\inv (p_i)\in \gamma_i\inv(S_i)$,
$S$ is identified with
$$
-(-1)^{i+1}\;W(\pi/m) \times \cdots\times W(\pi/m)
$$
locally at $\gamma_i\inv(p_i)$ by $\iota_{\gamma_i\inv(p_i)}$.
\par
Suppose that $\beta_0=\eta$.
We calculate the local intersection number of $L_{J, \beta}$ and $S$ at $p:=p_{k_0}$.
As was shown above,
the topological $n$-cycle $S$ is identified locally at $p$
with
$$
(-1)^{k_0+1} W(\pi/m) \times \cdots\times W(\pi/m)
$$
by  the local coordinates $(z_1, \dots, \hat{z_{k_0}}, \dots, z_{n+1})$
of $X\setminus Y_0$ with the origin  $p$.
Note that $\{1, \dots, \hat{k_0}, \dots, n+1\}$ is equal to $\{j_1, k_1, \dots, j_d, k_d\}$.
We permute the  coordinate system  $(z_1, \dots, \hat{z_{k_0}}, \dots, z_{n+1})$
to
$$
(z_{j_1}, z_{k_1}, \dots, z_{j_d}, z_{k_d}),
$$
and define a new open immersion
$$
\iota_p\sprime \colon\;\; \UUU_p\;\inj\;
\overbrace{\C^{\vphantom{2}}\times \cdots\times \C}^{\text{$n$ times}} =\overbrace{\C^2\times \dots \times \C^2}^{\text{$d$ times}}
$$
by this new coordinate system.
By $\iota_p\sprime$,
the topological $n$-cycle $S$ is identified locally at $p$
with
$$
(-1)^{k_0+1}\,\sgn (\sigma_J\sprime)\,  W(\pi/m) \times \cdots\times W(\pi/m),
$$
where $\sigma_J\sprime$ is the permutation
$$
\left(
\begin{array}{ccccccc}
1 & &\dots& \hat{k_0}&\dots & n &n+1 \\
j_1 & k_1 &\dots & & \dots & j_d & k_d
\end{array}
\right).
$$
On the other hand,
$L_{J, \beta}$ is identified by $\iota_p\sprime $
with
$$
\Lambda_{\beta_1}\times \dots\times \Lambda_{\beta_d}
$$
locally at $p$.
By Lemma~\ref{lem:LambdaW},
we have
\begin{equation}\label{eq:LSb}
\intp{L_{J, \beta}, S}=(-1)^{k_0+1}\sgn (\sigma_J\sprime) s(\beta_1)\cdots s(\beta_d)
\quad\textrm{if $\beta_0=\eta$}.
\end{equation}
\par
Suppose that $\beta_0=\eta\inv$.
We calculate the local intersection number of $L_{J, \beta}$ and $S$ at $p:=\gamma_{k_0}\inv (p_{k_0})$.
As was shown above,
the new open immersion $\iota_p\sprime$
identifies $S$  with
$$
-(-1)^{k_0+1}\sgn (\sigma_J\sprime)  W(\pi/m) \times \cdots\times W(\pi/m),
$$
locally at $p$.
Calculating as above,
we have
\begin{equation}\label{eq:LSc}
\intp{L_{J, \beta}, S}=-(-1)^{k_0+1}\sgn (\sigma_J\sprime) s(\beta_1)\cdots s(\beta_d)
\quad\textrm{if $\beta_0=\eta\inv$}.
\end{equation}
\par
The dependence on $\beta_0$ in the right-hand sides of~\eqref{eq:LSa},~\eqref{eq:LSb},~\eqref{eq:LSc}
can be expressed by the
extra
factor $s(\beta_0)$.
Observing that
$(-1)^{k_0+1} \sgn (\sigma_J\sprime)=\sgn (\sigma_J)$,
we complete the proof of Theorem~\ref{thm:LS}.
\qed
\section{The $R$-submodule $\LLLKKK(X)$}\label{sec:LLL}
\subsection{Preliminaries}
For an $R$-module $M$, we put $M\dual:=\Hom_{\Z}(M, \Z)$,
which is regarded as an $R$-module
\latin{via}
the contragredient action of $G$ on $M\dual$.
\par
Let $M$ be a finitely generated $\Z$-module.
We put $d_M:=\rank M=\dim_{\Q} M\tensor \Q$.
Note that $M$ is torsion free if and only if it can be generated by $d_M$
elements.
%
\begin{lemma}\label{lem:A}
Let $x_1, \dots, x_N$ be variables.
We put
$$
A:=\Z[x_1, \dots, x_N]/(x_1^m-1, \dots, x_N^m-1),
$$
and $\theta:=(x_1-1)\cdots (x_N-1)$.
Then $A/(\theta)$ is torsion free as a $\Z$-module.
Moreover  the annihilator ideal of $\theta$ in $A$ is generated by
$\phim(x_1), \dots, \phim(x_N)$.
\end{lemma}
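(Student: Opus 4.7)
We argue by induction on $N$, proving both assertions simultaneously. The base case $N=1$ is direct: $A_1 = \Z[x]/(x^m-1)$ is $\Z$-free on $1, x, \dots, x^{m-1}$; the identity $\phim(x)(x-1) = x^m - 1 = 0$ puts $\Z\cdot\phim(x)$ in the annihilator of $x-1$, and a coordinate computation (using the companion matrix of $x-1$) shows that this is the full annihilator. The quotient $A_1/(x-1) = \Z[x]/(x^m-1,\,x-1) = \Z$ is torsion free.

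For the inductive step, decompose $A = A'\otimes_\Z B$ with $A' := \Z[x_1,\ldots,x_{N-1}]/(x_1^m-1,\ldots,x_{N-1}^m-1)$ and $B := \Z[x_N]/(x_N^m-1)$. Factor $\theta = \theta'\cdot(x_N-1)$, where $\theta' := (x_1-1)\cdots(x_{N-1}-1)$; then multiplication by $\theta$ on $A$ is $T_1\otimes T_2$, where $T_1$ is multiplication by $\theta'$ on $A'$ and $T_2$ is multiplication by $x_N-1$ on $B$. Both $A'$ and $B$ are $\Z$-free, so the submodules $\Ker(T_i)$ and $\Image(T_i)$ are $\Z$-flat as well, and standard flat-module manipulations yield
$$
\Ker(T_1\otimes T_2) = \Ker(T_1)\otimes B + A'\otimes \Ker(T_2), \qquad \Image(T_1\otimes T_2) = \Image(T_1)\otimes \Image(T_2).
$$
By induction $\Ker(T_1) = (\phim(x_1),\ldots,\phim(x_{N-1}))$ in $A'$, and by the base case $\Ker(T_2) = \Z\cdot\phim(x_N)$ in $B$; read inside $A$, the kernel formula therefore recovers exactly the ideal $(\phim(x_1),\ldots,\phim(x_N))$, establishing the annihilator statement.

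For the torsion freeness of $A/(\theta) = A/\Image(T_1\otimes T_2)$, filter by $\Image(T_1)\otimes B$. By flatness, the successive quotients are $\Coker(T_1)\otimes B$ and $\Image(T_1)\otimes \Coker(T_2)$. The first is a direct sum of copies of $\Coker(T_1) = A'/(\theta')$, torsion free by induction. Since $\Coker(T_2) = B/(x_N-1)B = \Z$, the second reduces to $\Image(T_1)$, a subgroup of the free $\Z$-module $A'$, hence also torsion free. An extension of torsion-free abelian groups is torsion free, closing the induction. The one delicate step is the tensor product identity for the kernel, verified by lifting any element of the kernel through the flat map $T_1\otimes 1$; once it is in hand, both conclusions of the lemma follow in a routine manner.
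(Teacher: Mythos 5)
Your proof is correct, but it follows a genuinely different route from the paper's. The paper fixes the \texttt{grevlex} monomial order, observes that the relevant leading coefficients are all $1$ so the division algorithm works over $\Z$, extracts from it an explicit $\Z$-generating set for $A/(\theta)$ (the monomials with some exponent equal to~$0$), and concludes torsion-freeness by matching the size of that generating set with $\dim_\C(A/(\theta)\otimes\C)$; the annihilator statement comes from the same division argument applied to $\{\phim(x_1),\dots,\phim(x_N)\}$. You instead run an induction on~$N$, viewing multiplication by~$\theta$ on $A=A'\otimes_\Z B$ as a Kronecker product $T_1\otimes T_2$ and deducing both statements from the kernel and image of such a product. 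Each approach has its virtue: the Gr\"{o}bner argument is self-contained and hands you an explicit $\Z$-basis of $A/(\theta)$, which dovetails with the rank computations later in the paper, while your tensor argument is more structural and makes the multiplicative nature of the problem transparent.

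One caveat worth spelling out, since you correctly flag the kernel identity as ``the one delicate step'': the formula $\Ker(T_1\otimes T_2)=\Ker(T_1)\otimes B + A'\otimes\Ker(T_2)$ is \emph{not} automatic over $\Z$ for arbitrary endomorphisms of free modules. The lifting argument you sketch requires $(\Image T_1\otimes B)\cap(A'\otimes\Ker T_2)=\Image T_1\otimes\Ker T_2$, and this intersection formula can fail when the submodules involved are not pure (a two-by-two example over $\Z$ already breaks it). Here it does hold because $\Ker T_2=\Z\cdot\phim(x_N)$ is a \emph{direct summand} of $B$: the vector $(1,1,\dots,1)$ is primitive in $\Z^m$. (Equivalently, one could use that $\Coker T_2\cong\Z$ is torsion free, so $\Image T_2$ is a direct summand of $B$, and factor $T_1\otimes T_2$ the other way.) Making this purity observation explicit closes the only real gap in an otherwise clean alternative proof.
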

\begin{proof}
We fix the monomial order
{\tt grevlex} on $\Z[x_1, \dots, x_N]$ (see~\cite[Chapter 2]{MR1417938}).
Since the leading coefficients of $x_1^m-1, \dots, x_N^m-1$ and $\theta$ are $1$,
the division algorithm by the set of these polynomials can be carried out over $\Z$.
Then we see that
$A/(\theta)$ is generated as a $\Z$-module by
\begin{equation}\label{eq:genmons}
x_1^{\nu_1}\cdots x_{N}^{\nu_N}\quad \textrm{with $0\le \nu_i<m$ for all $i$ and $\nu_i=0$ for at least one $i$}.
\end{equation}
On the other hand, the reduced $0$-dimensional scheme
$\Spec (A/(\theta)\tensor \C)$
consists of the closed points
\begin{equation}\label{eq:genpts}
(a_1, \dots, a_N)\in \mu_m^N\quad \textrm{with  $a_i=1$ for at least one $i$}.
\end{equation}
The number of monomials in~\eqref{eq:genmons} is equal to the number of points in~\eqref{eq:genpts},
and the latter is equal to $d_{A/(\theta)}$.
Hence, by the observation above, we see that  $A/(\theta)$ is torsion free.
The second part also follows from the division algorithm over $\Z$ by $\{\phim(x_1), \dots, \phim(x_N)\}$ of monic polynomials
of degree $m-1$.
\erase{
\par
It is obvious that $\phim(x_i)\theta=0$ on $A$.
Suppose that $f(x_1, \dots, x_N)\in A$ is an annihilator of $\theta$.
Applying the division algorithm over $\Z$ by $\{\phim(x_1), \dots, \phim(x_N)\}$ of monic polynomials
of degree $m-1$,
we can assume that
$$
\deg_{x_i} f<m-1\quad \textrm{for $i=1, \dots, N$}.
$$
Suppose that $f\ne 0$.
then we have
$$
\deg_{x_i} f\theta<m\quad \textrm{for $i=1, \dots, N$},
$$
and hence $f\lambda\ne 0$ in $A$, which is absurd.
}
\end{proof}
\subsection{Proof of  Part (a) of Theorem~\ref{thm:main}}\label{subsec:proofa}
We define a non-degenerate symmetric bilinear form
$[\phantom{\cdot}, \phantom{\cdot}]\colon R\times R\to \Z$
by
$$
[\, t_1^{\nu_1}\cdots t_{n+1}^{\nu_{n+1}}\, , \, t_1^{\nu_1\sprime}\cdots t_{n+1}^{\nu_{n+1}\sprime}\,]:=
\delta_{\nu_1\nu_1\sprime} \dots \delta_{\nu_{n+1}\nu_{n+1}\sprime},
$$
where $\delta_{ij}$ is the Kronecker delta on $\Z/m\Z$.
Since $[\phantom{\cdot}, \phantom{\cdot}]$
obviously is unimodular and
satisfies $[gf, gf\sprime]=[f, f\sprime]$ for $f, f\sprime\in R$ and $g\in G$,
it induces an isomorphism $R \cong R\dual$ of $R$-modules.
Note that the image of the dual homomorphism $f\dual\colon M\dual \to R$ of
an $R$-linear  homomorphism
$f\colon R\to M$
is an ideal of $R$, and the cokernel of $f\dual$ is always torsion free,
because
$$
\Image f\dual
=\set{x\in R}{[x, y]=0\;\textrm{for any}\; y\in \Ker f}.
$$
\par
In particular,
the surjective homomorphism $R\surj V_n(X)$ in Theorem~\ref{thm:pham}
defines an ideal $ V_n(X)\dual\inj R$ of $R$  such that $R/V_n(X)\dual$ is torsion free
as a $\Z$-module.
On the other hand, the $G$-invariant  intersection pairing $\intpempty$ defines an isomorphism $H_n(X)\cong H_n(X)\dual$ of $R$-modules.
Hence we obtain the dual  homomorphism $H_n(X) \to V_n(X) \dual$ of $V_n(X) \inj H_n(X)$,
which is surjective because $V_n(X)$ is primitive in $H_n(X)$~(see~\eqref{eq:longexact}).
By construction, the composite $H_n(X) \to R$  of the two homomorphisms
$H_n(X) \surj V_n(X) \dual$ and $ V_n(X)\dual\inj R$
maps $\tau \in H_n(X) $ to
$$
\sum_{\nu_1, \dots, \nu_{n+1}\in \Z/m\Z} \;\; \intp{\,\tau, \gamma_1^{\nu_1} \cdots \gamma_{n+1}^{\nu_{n+1}} (S) \,}  \;\cdot\;
t_1^{\nu_1} \cdots t_{n+1}^{\nu_{n+1}} \;\; \in \;\; R.
$$
\par
Consider the composite
$$
\LLLKKK(X) \inj H_n(X) \surj V_n(X) \dual,
$$
where  the second homomorphism  is the dual of $V_n(X) \inj H_n(X)$.
Let $\LLLKKK\sprime(X)$ be the image of this composite.
We
have
the following:
\begin{claim}\label{claim:LLLKKK}
One has
$\rank \LLLKKK(X)=\rank \LLLKKK\sprime(X)+1$, and
$$
H_n(X)/\LLLKKK(X)\cong V_n(X) \dual/\LLLKKK\sprime(X).
$$
\end{claim}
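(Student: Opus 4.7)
My plan is to exploit the unimodularity of the intersection pairing on $H_n(X)$ together with the primitivity of $V_n(X)$ in $H_n(X)$, reducing the claim to a concrete identification of $V_n(X)^\perp$ with an element already present in $\LLLKKK(X)$.

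I would first establish that $V_n(X)$ has corank one in $H_n(X)$, so that its orthogonal complement $V_n(X)^\perp$ with respect to the intersection pairing has rank one. This is immediate from the long exact sequence~\eqref{eq:longexact}: by Lefschetz's hyperplane theorem $H_{n-2}(Y_0) \cong \Z$, and every standard $d$-space $L_{J,\beta}$ cuts $Y_0$ in a $(d-1)$-dimensional linear subspace, whence the map $H_n(X) \to H_{n-2}(Y_0)$ is surjective with kernel $V_n(X)$. Combined with Poincar\'e duality, this gives the short exact sequence
$$
0 \to V_n(X)^\perp \to H_n(X) \to V_n(X)\dual \to 0,
$$
in which $V_n(X)^\perp$ is a saturated rank-one submodule of $H_n(X)$. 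The generator can be identified: let $[h^d] \in H_n(X)$ denote the class of a generic codimension-$d$ linear section of $X$. Since $[h^d]$ is $G$-invariant, the expansion $[S] = (1-\gamma_1\inv)\cdots(1-\gamma_{n+1}\inv)D$ together with the $G$-invariance of $\intpempty$ gives $\intp{[h^d], g(S)} = 0$ for every $g \in G$, so $[h^d] \in V_n(X)^\perp$; moreover $[h^d] \cdot [L_{J,\beta}] = 1$ (transverse intersection of linear subspaces of complementary dimensions in $\Pz^{n+1}$), so $[h^d]$ is primitive and hence $V_n(X)^\perp = \Z\cdot[h^d]$.

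The core of the argument, and the step I expect to require the essential geometric input, is to exhibit $[h^d]$ as an integer combination of standard $d$-spaces in $\LLLKKK(X)$. My plan is to pick any $J = [[j_0,k_0],\dots,[j_d,k_d]] \in \KKK$ and any $(\beta_1,\dots,\beta_d)$ with $\beta_i^m = -1$, and then consider the codimension-$d$ linear subspace $P := \{z_{k_i} = \beta_i z_{j_i} : i = 1, \dots, d\} \subset \Pz^{n+1}$. On $P$ the relations $z_{k_i}^m + z_{j_i}^m = 0$ hold for $i \ge 1$, so the Fermat equation collapses to $z_{j_0}^m + z_{k_0}^m = 0$, which factors as $\prod_{\beta_0^m=-1}(z_{k_0} - \beta_0 z_{j_0})$. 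Thus $P \cap X$ is the transverse union of the $m$ standard $d$-spaces $L_{J,(\beta_0,\beta_1,\dots,\beta_d)}$, and
$$
[h^d] = [P \cap X] = \sum_{\beta_0^m=-1}[L_{J,(\beta_0,\beta_1,\dots,\beta_d)}] \in \LLLKKK(X).
$$
It is crucial to fix $(\beta_1,\dots,\beta_d)$ rather than to average over them: naive $G$-averaging $\sum_{\beta \in \BBB}[L_{J,\beta}]$ only produces $m^d[h^d]$, which would leave a spurious $\Z/m^d$ obstruction in the snake sequence.

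Once $V_n(X)^\perp \subset \LLLKKK(X)$ is known, the claim will follow by a routine diagram chase. The inclusion forces $\LLLKKK(X) \cap V_n(X)^\perp = V_n(X)^\perp$, a subgroup of rank one, immediately giving $\rank\LLLKKK(X) = \rank\LLLKKK\sprime(X) + 1$. Applying the snake lemma to the short exact sequence above together with the defining sequence $0 \to \LLLKKK(X) \cap V_n(X)^\perp \to \LLLKKK(X) \to \LLLKKK\sprime(X) \to 0$ yields an exact sequence
$$
0 \to V_n(X)^\perp/(\LLLKKK(X) \cap V_n(X)^\perp) \to H_n(X)/\LLLKKK(X) \to V_n(X)\dual/\LLLKKK\sprime(X) \to 0,
$$
whose left term now vanishes, producing the desired isomorphism $H_n(X)/\LLLKKK(X) \cong V_n(X)\dual/\LLLKKK\sprime(X)$.
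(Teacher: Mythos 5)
Your proof is correct and follows essentially the same route as the paper: the decisive geometric step—writing a codimension-$d$ linear section class of $X$ as a sum of $m$ standard $d$-spaces $L_{J,(\beta_0,\beta_1,\dots,\beta_d)}$ with only $\beta_0$ varying—is exactly how the paper exhibits $P_X\in\LLLKKK(X)$, and the surrounding dualities and diagram chase match. One small caveat: your derivation of $\intp{[h^d],S}=0$ implicitly pairs a homology class with the non-cycle chain $D$, which is not literally well-defined; this is easily repaired by choosing a $G$-invariant transversal representative of $[h^d]$ (or by citing the Lefschetz hyperplane section theorem, as the paper does, to get $V_n(X)^\perp=\Z P_X$ directly).
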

\begin{proof}
Let
$P_X\in H_n(X)$ denote the class of the intersection of $X$
and a $(d+1)$-dimensional
subspace of $\Pz^{n+1}$.
By the Lefschetz hyperplane section theorem, the kernel of $H_n(X)  \surj  V_n(X) \dual$ is
$\Z P_X$.
Therefore it is enough to show that $\LLLKKK(X)$ contains $P_X$.
Since $\KKK$ is non-empty,
we can assume by  a permutation of coordinates that $J_0:=[[0,1], [2,3], \dots, [n, n+1]]$ is an element of $\KKK$.
Consider the $(d+1)$-dimensional
subspace of $\P^{n+1}$ defined by
$$
z_2-\eta z_3=z_4-\eta z_5= \cdots = z_{2d}-\eta z_{2d+1} =0.
$$
Then its intersection with $X$ is defined in $\P^{n+1}$ by
$$
z_0^m+z_1^m=z_2-\eta z_3=z_4-\eta z_5= \cdots = z_{n}-\eta z_{n+1} =0,
$$
which  is the union of $m$ standard
$d$-spaces
$L_{[J_0, (\eta\zeta^{\nu}, \eta, \dots, \eta)]}$
for $\nu=0, \dots, m-1$ in $X$.
Thus we have $P_X\in \LLLKKK(X)$ and Claim~\ref{claim:LLLKKK} is proved.
\end{proof}
Since $\LLLKKK\sprime(X)$ is an $R$-submodule of the ideal $V_n(X)\dual$ of $R$
and $R/V_n(X)\dual$ is torsion free,
the torsion of $H_n(X)/\LLLKKK(X)\cong V_n(X) \dual/\LLLKKK\sprime(X)$
is isomorphic to the torsion of $R/\LLLKKK\sprime(X)$.
Therefore, in order to prove Part (a) of Theorem~\ref{thm:main},
it is enough to show that the ideal $\LLLKKK\sprime(X)$ of $R$ is generated by
the polynomials $\psi_J$,
where $J$ runs through $\KKK$.
\par
For each $J=[[j_0,k_0], \dots, [j_d, k_d]] \in \JJJ$,
we let $G$ acts on the set $\BBB$ by
\begin{equation}\label{eq:gJ}
[g, J](\beta):=
(
\;
\zeta^{-\nu_{k_0}}\beta_0,
\;\; \zeta^{\nu_{j_1}-\nu_{k_1}}\beta_1,
\dots,
\;\;
\zeta^{\nu_{j_d}-\nu_{k_d}}\beta_d
\;).
\end{equation}
Then we have
$$
g\inv (L_{J, \beta})=L_{J, [g, J](\beta)}.
$$
Moreover,
for any $\beta, \beta\sprime\in \BBB$ and $J\in \JJJ$,
there exists $g\in G$ such that $\beta\sprime=[g, J](\beta)$.
Hence,
for a fixed $J\in \JJJ$,
the $\Z$-submodule $\LLL_{\{J\}}(X)$
of $H_n(X)$
generated by the classes $[L_{J, \beta}]$
of $L_{J, \beta}$ ($\beta\in \BBB$)
is the
$R$-submodule generated by a single element
$[L_{J, (\eta, \dots, \eta)}]$.
It is therefore enough to show that
the image $\psi_J\sprime$ of $[L_{J, (\eta, \dots, \eta)}]$
by the homomorphism $\LLLKKK(X) \inj H_n(X) \surj V_n(X) \dual \inj R$ is equal to $\psi_J$
up to sign.
\par
Suppose that
$$
\psi_J\sprime=\sum a_{\nu_1\dots\nu_{n+1}} t_1^{\nu_1}\cdots t_{n+1}^{\nu_{n+1}},
$$
where the summation is taken over all $(n+1)$-tuples $(\nu_1, \dots, \nu_{n+1})\in (\Z/m\Z)^{n+1}$,
and $a_{\nu_1\dots\nu_{n+1}}\in \Z$.
For simplicity, we put
$$
e(\nu):=s(\zeta^{-\nu} \eta)= \begin{cases}
1 & \textrm{if $\nu=0$}, \\
-1 & \textrm{if $\nu=1$}, \\
0 & \textrm{otherwise}.
\end{cases}
$$
Then,
writing $\gamma_1^{\nu_1}\cdots \gamma_{n+1}^{\nu_{n+1}}$ by $g$,
we have
\begin{eqnarray*}
a_{\nu_1\dots\nu_{n+1}}
&=& \intp{L_{J, (\eta, \dots, \eta)}, g(S)} \\
&= & \intp{g\inv(L_{J, (\eta, \dots, \eta)}), S} \\
&= & \intp{L_{J, [g, J](\eta, \dots, \eta)}, S} \\
&= & \sgn(\sigma_J) e(\nu_{k_0}) e({\nu_{k_1}-\nu_{j_1}}) \cdots
e({\nu_{k_d}-\nu_{j_d}}).
\end{eqnarray*}
where the last equality follows from Theorem~\ref{thm:LS}.
It remains to notice that
$$
\sum_{\nu\in \Z/m\Z} e({\nu})t^{\nu}=1-t
\quand
\sum_{\nu, \nu\sprime\in \Z/m\Z} e(\nu-\nu\sprime)t_1^{\nu}t_2^{\nu\sprime}=
(1-t_1)\phi(t_1t_2).
$$
Therefore we
do
have $\psi_J\sprime=\pm \psi_J$.
\qed
\subsection{Proof of Theorem~\ref{thm:mainrank} and Corollary~\ref{cor:primem}}\label{subsec:proofrank}
We put
$$
A_{\KKK}:= R/\ideal(\psi_J | J \in \KKK).
$$
Let $K_p$ be an algebraically closed field of characteristic $p\ge 0$.
Since
$$
\dim_{K_p} (R\tensor K_p)=m^{n+1}
$$
does not depend on $p$,
the $\Z$-module $A_{\KKK}$ has a torsion element of order $p$ if and only if
$$
\dim_{K_p} (A_{\KKK}\tensor K_p) > \dim_{\C} (A_{\KKK}\tensor \C).
$$
On the other hand,
by Claim~\ref{claim:LLLKKK} and $\LLLKKK\sprime(X)=\ideal(\psi_J | J \in \KKK)$ in $R$,
we have
$$
\rank \LLLKKK(X)=m^{n+1}- \dim_{\C} (A_{\KKK}\tensor \C) +1.
$$
Therefore it is enough to prove the following:
\begin{claim}\label{claim:Kp}
If $p=0$ or $(p, m)=1$, then
$$
\dim_{K_p} (A_{\KKK}\tensor K_p)=m^{n+1}-|\Gamma_{\KKK}|.
$$
\end{claim}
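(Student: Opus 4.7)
The proof plan is to compute $\dim_{K_p}(A_{\KKK}\otimes K_p)$ by decomposing $R\otimes K_p$ into a product of copies of $K_p$ indexed by closed points of $\Spec(R\otimes K_p)=\mu_m^{n+1}$, and then reading off which components survive after imposing the relations $\psi_J=0$.

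First, since $p=0$ or $(p,m)=1$, each polynomial $t_i^m-1$ splits into distinct linear factors over $K_p$, and $\mu_m(K_p)$ has exactly $m$ elements. By the Chinese Remainder Theorem,
\[
R\otimes K_p \;\cong\; \prod_{a\in\mu_m^{n+1}} K_p,
\]
where the factor indexed by $a=(a_1,\dots,a_{n+1})$ is the quotient by the maximal ideal $(t_1-a_1,\dots,t_{n+1}-a_{n+1})$, and the image of any $f\in R$ in this factor is $f(a_1,\dots,a_{n+1})\in K_p$. Consequently,
\[
A_{\KKK}\otimes K_p \;\cong\; \prod_{a\in S} K_p,\qquad S:=\{\,a\in\mu_m^{n+1}\;:\;\psi_J(a)=0\ \text{for all}\ J\in\KKK\,\},
\]
so $\dim_{K_p}(A_{\KKK}\otimes K_p)=|S|$.

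Next I would unpack when $\psi_J(a)\neq 0$. Since $\phi(t)=(t^m-1)/(t-1)$, for any $c\in\mu_m(K_p)$ one has $\phi(c)=0$ iff $c\neq 1$. Writing $\psi_J=\tau_J\cdot\prod_{i=1}^{d}\phi(t_{j_i}t_{k_i})$, the vanishing of $\tau_J$ at $a$ is equivalent to $a_{k_i}=1$ for some $i\in\{0,\dots,d\}$, while the vanishing of the product of $\phi$'s is equivalent to $a_{j_i}a_{k_i}\neq 1$ for some $i\in\{1,\dots,d\}$. Thus $\psi_J(a)\neq 0$ if and only if
\[
a_{k_i}\neq 1\ \text{for all}\ i=0,\dots,d,\qquad \text{and}\qquad a_{j_i}a_{k_i}=1\ \text{for all}\ i=1,\dots,d.
\]
Since $j_0=0$, the index set $\{k_0,j_1,k_1,\dots,j_d,k_d\}$ equals $\{1,\dots,n+1\}$, and the two displayed conditions together force $a_i\neq 1$ for every $i=1,\dots,n+1$; conversely this ``$a_i\neq 1$'' condition together with $a_{j_i}a_{k_i}=1$ for $i\geq 1$ automatically gives $a_{k_0}\neq 1$. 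Hence $\psi_J(a)\neq 0$ is exactly the condition that $a$ witnesses membership in $\Gamma_{\KKK}$ via this particular $J$.

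Finally, taking the union over $J\in\KKK$: the definition of $\Gamma_{\KKK}$ is precisely that some $J\in\KKK$ satisfies the above, so $\Gamma_{\KKK}=\{a:\exists J\in\KKK,\ \psi_J(a)\neq 0\}=\mu_m^{n+1}\setminus S$. Therefore $|S|=m^{n+1}-|\Gamma_{\KKK}|$, which proves the claim. There is no real obstacle here once the CRT decomposition is in place; the only point requiring care is the bookkeeping that matches the product of vanishing/non-vanishing conditions on $\tau_J$ and the $\phi(t_{j_i}t_{k_i})$'s with the conditions defining $\Gamma_{\KKK}$ in Definition~\ref{def:gamma}, which hinges on the identity $\{k_0,j_1,k_1,\dots,j_d,k_d\}=\{1,\dots,n+1\}$.
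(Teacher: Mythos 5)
Your proof is correct and follows essentially the same route as the paper: both decompose $R\otimes K_p$ as a product of $m^{n+1}$ copies of $K_p$ (you via CRT, the paper via semisimplicity and $\Spec$), identify $A_{\KKK}\otimes K_p$ with the common zero locus of the $\psi_J$, and then match the condition $\psi_J(a)\neq 0$ pointwise with the defining condition of $\Gamma_{\KKK}$ using $\phi(c)=0\Leftrightarrow c\neq 1$ for $c\in\mu_m$ and the fact that $\{k_0,j_1,k_1,\dots,j_d,k_d\}=\{1,\dots,n+1\}$.
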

Thus, from
now on we assume that $p=0$ or $(p, m)=1$.
Then $R\tensor K_p$ is a semisimple ring, and all its simple modules have
dimension one over~$K_p$: they correspond to the multi-eigenvalues of
$(t_1,\ldots,t_{n+1})$, which are all $m$-th roots of unity (\latin{cf}.
Definition~\ref{def:gamma} in the case $K_p=\C$).
In other words,
$$
M:=\Spec (R\tensor K_p)
$$
is a reduced
scheme
of dimension zero
consisting of $m^{n+1}$ closed points.
Then $\Spec (A_{\KKK}\tensor K_p)$ is a closed subscheme $M_{\KKK}$ of $M$,
and $\dim_{K_p} (A_{\KKK}\tensor K_p)$ is the number of closed points of $M_{\KKK}$.
Let $\Gamma_{\KKK}$ be the subset of $M$ defined by  Definition~\ref{def:gamma}
with $\C$ replaced by $K_p$.
Note that, for $a\in K_p\sptimes$ with $a^m=1$, we have
$$
\phim(a)=0\;\; \Longleftrightarrow\;\; a\ne 1.
$$
Therefore, for $P=(a_1, \dots, a_{n+1})\in M$, we have
\begin{eqnarray*}
P\notin M_{\KKK} & \Longleftrightarrow & \psi_J(a_1, \dots, a_{n+1})\ne 0\;\; \textrm{for some $J\in \KKK$} \\
& \Longleftrightarrow & a_{k_0}\ne 1,\dots, a_{k_d}\ne 1 \;\textrm{and}\; a_{j_1}a_{k_1}=\cdots= a_{j_d}a_{k_d}=1 \\
&& \phantom{aaaaaaaaaaaaaaaa} \textrm{for some $J=[[j_0, k_0], \dots, [j_d, k_d]]\in \KKK$} \\
& \Longleftrightarrow & a_{i}\ne 1 \;\; \textrm{for $i=1, \dots, n+1$} \quand  \\
&& a_{j_1}a_{k_1}=\cdots= a_{j_d}a_{k_d}=1 \;\;\textrm{for some $J=[[j_0, k_0], \dots, [j_d, k_d]]\in \KKK$} \\
& \Longleftrightarrow &  P\in \Gamma_{\KKK}.
\end{eqnarray*}
Therefore we have $\dim_{K_p} (A_{\KKK}\tensor K_p)=|M_{\KKK}|=|M|-|\Gamma_{\KKK}|$.
This
concludes the proof of Claim~\ref{claim:Kp} and, hence, that
of Theorem~\ref{thm:mainrank} and Corollary~\ref{cor:primem}.
\qed
\begin{remark}\label{rem:dim}
The rank of $\LLL(X)=\LLL_{\JJJ}(X)=1+|\Gamma_{\JJJ}|$ is equal to the constant term of  the expansion of
\begin{equation*}\label{eq:Xpol}
\begin{cases}
1+(x_1+\cdots+x_{h-1}+1+x_{h-1}\inv+\cdots +x_1\inv )^{n+2} & \textrm{if $m=2h$ is even},\\
1+(x_1+\cdots+x_h+x_h\inv+\cdots +x_1\inv )^{n+2} & \textrm{if $m=2h+1$ is odd.} \\
\end{cases}
\end{equation*}
For small dimensions~$n$, we have
\begin{equation*}
\rank \LLL(X)=
\begin{cases}
3 m^2-9 m+6 +\delta_{m} & \textrm{ for $n=2$,}\\
15 m^3-90 m^2+175 m -100  +(15m-39)\delta_{m} & \textrm{ for $n=4$,}\\
105 m^4-1050 m^3+3955 m^2-6335 m +3325  +{}&\\
\phantom{aaaaaaaaaaaa} +(210m^2-1302m+2010)\delta_{m} & \textrm{ for $n=6$,}
\end{cases}
\end{equation*}
where $\delta_{m}\in \{0, 1\}$ satisfies $\delta_{m}\equiv m-1\bmod 2$.
\end{remark}
\subsection{Proof of Part (b) of Theorem~\ref{thm:main}}\label{subsec:proofb}
The following lemma is immediate:
\begin{lemma}\label{lem:phirho}
In $\Z[x, y]/(x^m-1, y^m-1)$, we have
$$
(y-1)\phim(xy)=-(x-1)(y-1)\rhom(x, y).
$$
\end{lemma}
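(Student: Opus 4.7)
The plan is to verify the identity by a direct polynomial computation, working in the ambient ring $\Z[x,y]$ and reducing only at the very last step. First I would expand $(x-1)\rhom(x,y)$ using the double-sum form
$$
\rhom(x,y) = \sum_{0\le \nu\le \mu \le m-2} x^\mu y^\nu,
$$
split the product $(x-1)\rhom(x,y)$ into two sums, shift the index $\mu\mapsto\mu+1$ in the first, and collect coefficients of each monomial $x^\mu$. The ``interior'' exponents $0<\mu<m-1$ telescope to a single term $-(xy)^\mu$; the extremes $\mu=0$ and $\mu=m-1$ contribute $-1$ and $x^{m-1}\sum_{\nu=0}^{m-2} y^\nu$, respectively. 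Assembling these pieces yields the \emph{polynomial} identity
$$
(x-1)\rhom(x,y) \;=\; x^{m-1}\phim(y)\;-\;\phim(xy)
$$
in $\Z[x,y]$ (no quotient needed at this stage).

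From here I would multiply by $(y-1)$ and use the factorization $(y-1)\phim(y)=y^m-1$ to obtain
$$
(x-1)(y-1)\rhom(x,y) \;=\; x^{m-1}(y^m-1)\;-\;(y-1)\phim(xy).
$$
Reducing modulo $y^m-1$ kills the first summand and delivers exactly the claim $(y-1)\phim(xy) = -(x-1)(y-1)\rhom(x,y)$.

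I expect no real obstacle: the computation is routine bookkeeping of a telescoping sum, which matches the authors' description of the lemma as ``immediate.'' In fact the argument shows slightly more than stated: since only the relation $y^m=1$ is invoked, the identity already holds in $\Z[x,y]/(y^m-1)$, so the symmetric relation obtained by swapping the roles of $x$ and $y$ holds in $\Z[x,y]/(x^m-1)$ as well. For the paper's later uses in Sections~\ref{subsec:proofa}--\ref{subsec:proofb}, the form stated in the quotient $\Z[x,y]/(x^m-1,y^m-1)$ is sufficient.
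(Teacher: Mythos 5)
Your computation is correct, and the resulting polynomial identity $(x-1)\rhom(x,y)=x^{m-1}\phim(y)-\phim(xy)$ in $\Z[x,y]$ checks out: the telescoping of the inner sums is handled accurately, and multiplying by $(y-1)$ and reducing $y^m\mapsto 1$ gives the lemma. The paper itself supplies no proof (it labels the lemma ``immediate''), so there is no authorial argument to compare against; your direct verification is exactly the kind of bookkeeping the authors had in mind, and the side observation that only the relation $y^m=1$ is used is a correct and mildly sharpening remark.
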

We put
$$
\lambda:=(t_1-1)\cdots (t_{n+1}-1).
$$
By Lemma~\ref{lem:phirho},  we have
$$
\psi_J:=\pm \lambda \rhom_J.
$$
Hence $R/\ideal(\psi_J | J \in \KKK)$ in Part (a) of Theorem~\ref{thm:main} is equal to
$R/\ideal(\lambda \rhom_J | J \in \KKK)$.
Consider the natural exact sequence
$$
0
\;\;\to\;\;
(\lambda)/\ideal(\lambda \rhom_J | J \in \KKK)
\;\;\to\;\;
R/\ideal(\lambda \rhom_J | J \in \KKK)
\;\;\to\;\;
R/(\lambda)
\;\;\to\;\;
0.
$$
Since $R/(\lambda)$ is a free $\Z$-module by Lemma~\ref{lem:A},
the torsion of $R/\ideal(\psi_J | J \in \KKK)$ is isomorphic to the torsion of
$(\lambda)/\ideal(\lambda \rhom_J | J \in \KKK) $.
The homomorphism $R\surj (\lambda)$ given by
$f\mapsto f\lambda$ identifies $(\lambda)$ with $\barR$  by
Lemma~\ref{lem:A},
and under this identification,
the submodule $\ideal(\lambda \rhom_J | J \in \KKK) $ of $(\lambda)$
coincides with the ideal $\ideal(\rhom_J | J \in \KKK)$ of $\barR$.
Therefore we have $(\lambda)/\ideal(\lambda \rhom_J | J \in \KKK) \cong \barR/\ideal(\rhom_J | J \in \KKK)$.
\qed
\subsection{Proof of Parts (c) and (d) of Theorem~\ref{thm:main}}\label{subsec:proofcd}
%
%
%
Part (c) and Part (d) are dual to Part (a) and Part (b), respectively.
We use the following simple observation.
Let $\varphi\colon M_1\to M_2$ be a homomorphism
of free $\Z$-modules,
and let
$\varphi\dual\colon M_2\dual\to M_1\dual$ be the dual of $\varphi$.
Then there exist canonical isomorphisms
$$
\Tors\Coker(\varphi)=
 \operatorname{Ext}_{\Z}(\Tors\Coker(\varphi\dual),\Z)=
 \Hom_{\Z}(\Tors\Coker(\varphi\dual),\Q/\Z),
$$
where $\Tors M$ denotes the torsion of a $\Z$-module $M$.
Hence, there also exists a non-canonical  isomorphism
$\Tors\Coker(\varphi)\cong\Tors\Coker(\varphi\dual)$.
\par
\medskip
We put
$$
L_{\KKK}:=\bigcup_{J\in \KKK,\; \beta\in \BBB} L_{J, \beta},
$$
and consider the groups
$$
H_n(L_{\KKK})=\bigoplus_{J\in \KKK,\; \beta\in \BBB}\Z [L_{J, \beta}],
\quad
H^n(L_{\KKK})=\bigoplus_{J\in \KKK,\; \beta\in \BBB}\Z [L_{J, \beta}]\dual,
$$
each of which has a natural structure of the $R$-modules~(see~\eqref{eq:gJ}).
The inclusion $L_{\KKK}\inj X$ induces an $R$-linear homomorphism
$$
\varphi\colon H_n (L_{\KKK}) \to H_n(X).
$$
Then $H_n(X)/\LLL_{\KKK}(X)=\Coker(\varphi)$.
Note that $\intpempty$ defines an isomorphism $H_n(X)\cong H_n(X)\dual$
(the Poincar\'{e} duality),
and hence we obtain the dual homomorphism
$$
\varphi\dual\colon  H_n(X)\to H^n(L_{\KKK}).
$$
By the observation above, the torsion in question is the dual of the torsion of $\Coker(\varphi\dual)$,
and hence these torsions are isomorphic.
Consider the composite
$$
\varphi\dual_V\colon  R\surj V_n(X)\inj  H_n(X)\to H^n(L_{\KKK}),
$$
where the first surjection is given by
Theorem~\ref{thm:pham}.
Since $V_n(X)$ is primitive in $H_n (X)$ (see~\eqref{eq:longexact}),
the torsion of $H_n(X)/\LLL_{\KKK}(X)$ is isomorphic to the torsion of $\Coker (\varphi\dual_V)$.
Recall that we regard $H^n(L_{\KKK})$ as an $R$-module
\latin{via}
$$
g([L_{J, \beta}]\dual)=[L_{J, [g\inv, J]\beta}]\dual.
$$
For $J=[[j_0, k_0], \dots, [j_d, k_d]]\in \KKK$,
 the natural homomorphism
\begin{equation}\label{eq:RRe}
R\surj R [L_{J, (\eta, \dots, \eta)}]\dual=\bigoplus_{\beta\in \BBB} \Z [L_{J,\beta}]\dual
\end{equation}
given by $1\mapsto [L_{J, (\eta, \dots, \eta)}]\dual$
identifies $R [L_{J, (\eta, \dots, \eta)}]\dual$ with
\begin{equation}\label{eq:RJ}
R_J= R/ (t_{j_1}t_{k_1}-1, \dots, t_{j_d}t_{k_d}-1)=\Z[t_{k_0}, \dots, t_{k_d}]/(t_{k_0}^m-1, \dots, t_{k_d}^m-1),
\end{equation}
where the second equality follows from the relations $t_{j_{\nu}}=t_{k_{\nu}}^{m-1}$ ($\nu=1, \dots, d$) in $R_J$.
Indeed, each $t_{j_{i}} t_{k_{i}}-1$
is contained in the kernel of~\eqref{eq:RRe}
by the definition~\eqref{eq:gJ} of the action of $G$,
and both $\Z$-modules   $R_J$ and $R [L_{J, (\eta, \dots, \eta)}]\dual$ are free of rank $m^{d+1}=|\BBB|$.
Hence
we have
$$
H^n(L_{\KKK})=\bigoplus_{J\in \KKK} R_J.
$$
The homomorphism
$\varphi\dual_V$ is given by
$$
1\mapsto \sum_{J\in \KKK}\sum_{\beta\in \BBB} \intp{S, L_{J, \beta}} [L_{J,\beta}]\dual.
$$
For  $J=[[j_0, k_0], \dots, [j_d, k_d]]\in \KKK$,
we have
$$
[(\gamma_{k_0}^{-\alpha_0} \cdots \gamma_{k_d}^{-\alpha_d})\inv ,J](\eta, \dots, \eta)=(\zeta^{-\alpha_0}\eta, \dots, \zeta^{-\alpha_d}\eta),
$$
and hence,
by Theorem~\ref{thm:LS}, we obtain
\begin{eqnarray*}
&& \sum_{\beta\in \BBB} \intp{S, L_{J, \beta}} [L_{J,\beta}]\dual\\
 &=&
 \sgn(\sigma_J) \sum_{\alpha_0\in \Z/m\Z}\cdots \sum_{\alpha_d\in \Z/m\Z}
 e(\alpha_0)\cdots e(\alpha_d)  [L_{J,(\zeta^{-\alpha_0}\eta, \dots, \zeta^{-\alpha_d}\eta)}]\dual \\
 &=&
 \sgn(\sigma_J) \sum_{\alpha_0=0}^1\cdots \sum_{\alpha_d=0}^1
 e(\alpha_0)\cdots e(\alpha_d)  \gamma_{k_0}^{-\alpha_0} \cdots \gamma_{k_d}^{-\alpha_d} [L_{J,(\eta, \dots,\eta)}]\dual \\
  &=&
  \sgn(\sigma_J)  (1-t_{k_0}^{-1})\cdots (1-t_{k_d}^{-1})  [L_{J,(\eta, \dots,\eta)}]\dual\\
  &=&
  \sgn(\sigma_J)   (t_{k_0}-1) \cdots   (t_{k_d}-1) t_{k_0}\inv \cdots t_{k_d}\inv [L_{J,(\eta, \dots,\eta)}]\dual\\
   &=&    \tau_J c_J,
\end{eqnarray*}
where $c_J:=\sgn(\sigma_J)t_{k_0}\inv \cdots t_{k_d}\inv [L_{J,(\eta, \dots,\eta)}]\dual$.
Note that $\sgn(\sigma_J)  t_{k_0}\inv \cdots t_{k_d}\inv$ is a unit in $R_J$.
Replacing the generator $[L_{J,(\eta, \dots,\eta)}]\dual$  of  each factor of
$H^n(\LLL_{\KKK})=\bigoplus_{J\in \KKK} R_J$
by $c_J$,
the image of $\varphi\dual_V$ is the $R$-submodule $\MMM$ generated  by
$$
s:=\sum_{J\in \KKK}  \tau_J 1_J.
$$
Thus Part (c) is proved.
\par
For  $J=[[j_0, k_0], \dots, [j_d, k_d]]\in \KKK$,
let $(\tau_J)$ be the ideal of $R_J$ generated by $\tau_J$.
Then $s \in L_{\KKK}\dual=\bigoplus_{J\in \KKK} R_J$ is contained in $\bigoplus_{J\in \KKK} (\tau_J)$.
We consider the exact sequence
$$
0
\;\;\to\;\;
\left(\bigoplus_{J\in \KKK}  (\tau_J)\right)/Rs
\;\;\to\;\;
\left(\bigoplus_{J\in \KKK} R_J\right)/Rs
\;\;\to\;\;
\bigoplus_{J\in \KKK} \left(R_J/(\tau_J)\right)
\;\;\to\;\;
0.
$$
Since
$$
R_J/ (\tau_J)=\Z[t_{k_0}, \dots, t_{k_d}]/(t_{k_0}^m-1, \dots, t_{k_d}^m-1, \tau_J)
$$
is a free $\Z$-module by the second equality of~\eqref{eq:RJ} and Lemma~\ref{lem:A},
the torsion of $\bigoplus_{J\in \KKK} R_J/Rs$ is isomorphic to
the torsion of $\bigoplus_{J\in \KKK}  (\tau_J)/Rs$.
On the other hand,
the homomorphism  $R_J\surj (\tau_J)$ given by $f\mapsto f\tau_J$
identifies $(\tau_J)$ with
$$
\barR_J=\Z[t_{k_0}, \dots, t_{k_d}]/(\phim(t_{k_0}), \dots, \phim(t_{k_d}))
$$
by Lemma~\ref{lem:A},
and under this identification, the element $\tau_J\in (\tau_J)$ corresponds to the multiplicative unit $1_J$ of
$\barR_J$.
Therefore, by $\bigoplus_{J\in \KKK}  (\tau_J)\cong \bigoplus_{J\in \KKK} \barR_J$,
the element $s\in \bigoplus_{J\in \KKK} R_J$ corresponds to $\sum_{J\in\KKK} 1_J \in \bigoplus_{J\in \KKK} \barR_J$.
Hence $(\bigoplus_{J\in \KKK}  (\tau_J))/Rs$ is isomorphic to
$(\bigoplus_{J\in \KKK} \barR_J)/ \widebar{\MMM}$.
\qed
\subsection{Proof of Corollaries~\ref{cor:W} and~\ref{cor:Wext}}\label{subsec:proofcorW}
To prove Corollary~\ref{cor:W},
we merely put
$J_0:=[[0,1],[2,3], \dots, [n, n+1]]$,
and apply Part (d) of Theorem~\ref{thm:main} to the case
$\KKK=\{ J_0\}$.
We immediately see that $\LLL_{\{ J_0\}}(X)$ is primitive in $H_n(X)$.
Let $\WWW=\{W_t\}_{t\in U}$ be the family of smooth hypersurfaces defined by the equations of the form~\eqref{eq:eqW}.
The parameter space $U$ of this family is connected,
and hence there exists a path $\gamma\colon [0, 1]\to U$ from
the Fermat variety $X=W_{\gamma(0)}$  to an arbitrary member $W=W_{\gamma(1)}$ of $\WWW$.
Along the family $W_{\gamma(t)}$, the
subspaces $L_{J_0, \beta}$ ($\beta\in \BBB$) in $X$ deform
to
subspaces of $W_{\gamma(t)}$
defined by equations of the form
$$
\beta^{(i)}_{\nu}(t) z_{2i} =\alpha^{(i)}_{\nu}(t) z_{2i+1}\quad (i=0, \dots, d,\;\; \nu=1, \dots, m).
$$
Thus, along the constant
(with respect to the Gauss--Manin connection)
family $H_n(W_{\gamma(t)})$ of $\Z$-modules over $\gamma$,
the submodule $\LLL_{\{J_0\}}(X)$ of $H_n(X)$ is transported to the submodule of $H_n(W)$
generated by  the classes $[L\sprime_{(\nu_0, \dots, \nu_{d})}]$
of
subspaces $L\sprime_{(\nu_0, \dots, \nu_{d})}$ in $W$.
The rank and the primitivity are preserved during the
transport.
\par
For Corollary~\ref{cor:Wext}, we use the same continuity argument,
deforming $W_{s}$ to the Fermat variety and
representing the submodule in question as $\LLL_{\JJJ_s}(X)$, where $\JJJ_s$ is
the set of all partitions ``identical beyond~$s$'', \ie, those of the form
$$
[[j_0, k_0], \dots, [j_s,k_s], [2s+2,2s+3],\dots,[n, n+1]],\quad
0\le j_i,k_i\le 2s+1.
$$
The restriction of $\JJJ_s$ to the index set $\overline{2s+1}$ is well-defined and coincides with the full set
$\JJJ(2s)$ of partitions of $\overline{2s+1}$.
Then, denoting by $(\,\cdot\,)$ the dependence on the dimension (or
the number of
variables in the polynomial rings), it is easy to see that the module
$\bmodule_{\JJJ_s}(2d)$ given by Part~(d) of Theorem~\ref{thm:main}
can be represented in the form
$$
\bmodule_{\JJJ_s}(2d)=\bmodule_{\JJJ(2s)}(2s)\otimes_{\Z}\widebar S(s,d),
$$
where
$$
\widebar S(s,d):=\Z[t_{2s+2},t_{2s+4},\ldots,t_{2d}]/
 ( \phim(t_{2s+2}), \phim(t_{2s+4}), \dots, \phim(t_{2d})).
$$
(Since the tail of each partition is fixed, we have the
``constant'' relations
$$
t_{2s+2}t_{2s+3}=\dots=t_{2d}t_{2d+1}=1;
$$
hence, we can retain the even index
variables only and take these variables out.)
Thus, this module is free (as an abelian group) if and only if so is
$\bmodule_{\JJJ(2s)}(2s)$, \ie, if and only if
Conjecture~\ref{conj:main} holds for Fermat varieties of dimension~$2s$
in $\P^{2s+1}$.
\par
For the last assertion of Corollary~\ref{cor:Wext}, we observe that
Conjecture~\ref{conj:main} does hold for the Fermat varieties of dimension~$0$
(obvious) and~$2$ (see~\cite{arXiv13053073}).
\qed
\section{Computational criterion}\label{sec:comp}
In this section,
we focus on the description of the torsion of $H_n(X)/\LLLKKK(X)$
given by Part~(b) of Theorem~\ref{thm:main}.
We put
$$
B_{\KKK}:=\barR/\ideal(\rhom_J| J\in \KKK).
$$
By Lemma~\ref{lem:phirho}, the ideal $\ideal(\rhom_J| J\in \KKK)$ defines
the closed subscheme $\Gamma_{\KKK}$ in the reduced $0$-dimensional scheme
$\Spec(\barR\tensor\C)=(\mu_m\setminus\{1\})^{n+1}$,
and hence we can calculate $d_0:=\dim_{\C} (B_{\KKK}\tensor\C)=|\Gamma_{\KKK}|$.
On the other hand, for each prime divisor $p$ of $m$,
we can calculate $d_p:=\dim_{\F_p}  (B_{\KKK}\tensor\F_p)$
by calculating a Gr\"{o}bner basis of the ideal
\begin{equation}\label{eq:idealKKK}
(\phim(t_1), \dots, \phim(t_{n+1}))+\ideal(\rhom_J|J\in \KKK)
\end{equation}
in the polynomial ring $\F_p[t_1, \dots, t_{n+1}]$.
By  Corollary~\ref{cor:primem}, we see that $\LLLKKK(X)$ is primitive in $H_n(X)$
if and only if
$d_0=d_p$ holds for any prime divisor $p$ of $m$.
\par
Using  this method,
we have confirmed the primitivity of $\LLL(X)=\LLL_{\JJJ}(X)$ in $H_n(X)$
by the computer-aided calculation in the following cases:
\begin{equation*}\label{eq:confirmed}
(n, m)=(4,m) \;\;\textrm{where}\;\; 3\le m\le 12,
\quad
(6, 3),(6,4),(6,5), (8,3).
\end{equation*}
\bibliographystyle{plain}

\def\cftil#1{\ifmmode\setbox7\hbox{$\accent"5E#1$}\else
  \setbox7\hbox{\accent"5E#1}\penalty 10000\relax\fi\raise 1\ht7
  \hbox{\lower1.15ex\hbox to 1\wd7{\hss\accent"7E\hss}}\penalty 10000
  \hskip-1\wd7\penalty 10000\box7} \def\cprime{$'$} \def\cprime{$'$}
  \def\cprime{$'$} \def\cprime{$'$}

\end{document}